\newcommand{\Z}{\mathbb{Z}}
\newcommand{\F}{\mathbb{F}}
\newcommand{\Q}{\mathbb{Q}}
\newcommand{\C}{\mathbb{C}}
\DeclareMathOperator{\Integer}{Int}
\providecommand{\Int}[1]{\Integer \left( #1 \right)}
\providecommand{\set}[1]{ \left\{ #1 \right\} }
\theoremstyle{plain}
\newtheorem{theorem}{Theorem}[section]
\newtheorem{proposition}[theorem]{Proposition}
\theoremstyle{definition}
\newtheorem{definition}[theorem]{Definition}
\theoremstyle{remark}
\newtheorem{remark}[theorem]{Remark}
\numberwithin{equation}{theorem}
\numberwithin{table}{section}
\begin{document}

\author{Jos\'e Alejandro {Lara Rodr\'iguez}}

\title{Special relations between  multizeta values  and parity results}

\address{
Facultad de Matem\'aticas de la Universidad Aut\'onoma de Yucat\'an
Anillo Perif\'erico Norte, Tablaje Cat. 13615, Colonia Chuburn\'a Hidalgo Inn,
M\'erida Yucat\'an, M\'exico
}

\address{Departamento de Control Autom\'atico, 
Centro de Investigaci\'on y de Estudios Avanzados del IPN,
Av. Instituto Polit\'ecnico Nacional 2508, San Pedro Zacatenco, 07360,
M\'exico, D.F.
}
\email{lrodri@uady.mx, jlara@ctrl.cinvestav.mx  }
\date{June 8, 2011}

\keywords{Multizeta, function fields, $t$-motives, periods}

\begin{abstract}
We study relations between  multizeta values for function fields 
introduced by Thakur in \cite{Thakur_Multizeta08, Thakur}. The $\F_p$-span of
 Thakur's multizeta values is an algebra \cite{Thakur_Shuffle09}. In
particular, the product $\zeta(a)\zeta(b)$ is a
linear combination of  multizeta values. 
In this paper, several of the conjectures formulated in \cite{Jalr10,
Thakur_Multizeta08} for small values or for special families of $a$ about how to
write $\zeta(a)\zeta(b)$
as an $\F_p$-linear combination of multizeta values, are proved. Also, the
parity conjecture formulated in \cite{Thakur_Multizeta08} is proved.
\end{abstract}

\maketitle

\section{Introduction}

There are various interesting analogies \cite{Goss, Thakur, Villa2006,
Rosen2002} between function
fields over finite fields and number fields. These analogies have been 
used to guess and prove results in one setting from the other. 
 We start, at a very basic level, 
with the simplest analogies. The field $K = \F_q(t)$ of rational
functions over $\F_q$ is a good analogue of the field $\Q$ of rational 
numbers. The
polynomial ring $A = \F_q[t]$ is the analogue of the ring of integers $\Z$.
Similarly, we have analogies $K_\infty \leftrightarrow \mathbb{R}$ and $\C_{\infty}
\leftrightarrow \C$, where the notation is defined below. 
 These analogies  have helped 
the development of number theory. 

Recall that the Riemann zeta function is defined as $\zeta_{\Z}(s) =
\sum_{n=1}^{\infty}n^{-s}$, where $s \in \C$ and $\Re s>1$.  There is a rich
$\emph{special values}$ theory associated to $\zeta_{\Z}(s)$, which is
intimately connected to Bernoulli numbers, $B_n$. If $n \geq 0$, we have
$\zeta_{\Z}(-n)=-{B_{n+1}}/{(n+1)}$. Consequently, if $n\ge 1$,
$\zeta_{\Z}(-2n)=0.$ Such zeros are called trivial zeros and they are simple
zeros. With respect to the non-trivial zeros, the well known Riemann hypothesis
says that the non-trivial zeros of $\zeta_{\Z}(s)$ lie on the line $\Re
s={1}/{2}$. It is still unknown whether the Riemann 
hypothesis holds. For $m=2k$, $k>0$
an integer, we have Euler's Theorem
\begin{align*}
  \zeta_{\Z}(m)=-\frac{B_m(2\pi i)^m}{2(m!)}.
\end{align*}
There is no simple formula for $\zeta_{\Z}(2k+1)$ analogous to the
previous one. It is not known even whether $\zeta_{\Z}(2k+1)$ is rational
or irrational, except for $k=1$ when it is irrational by well-known result of
Ap\'ery \cite{Apery1979}.

For function field analogy, the Artin-Weil zeta function  is defined
by $\zeta_A(s) = \sum \mbox{Norm}(I)^{-s}$, where the sum is over nonzero ideals
and $s$ is a complex variable, with $\Re s>1$. The Riemann hypothesis in this
case is known by Weil's theorem, but it is only a rational function of
$q^{-s}$. So, for example there cannot be an analogue of Euler's Theorem
connecting $\zeta_A(2k)$ to $(2\pi i)^{2k}$.

A more suitable analogue of transcendental special values of the Riemann zeta
are the  Carlitz zeta values  defined by $\zeta_A(s) = \sum
_{a\in A_+} a^{-s}$, where $s\in \Z_+$ and $A_+$ denotes the monic polynomials
in
$A = \F_q[t]$. Here the requirement monic is playing the role of ``positive'' in
the classical Riemann zeta function $\zeta_{\Z}(s)$. In other words, instead of
the norm which just depends on the degree of the polynomial, Carlitz  used the
whole polynomial, paying the price of considering a smaller domain for $s$,
since we do not know how to raise a polynomial
to a complex power. More justification lies in the following result 
\cite{C1, C2}, \cite[Theorem
5.2.1]{Thakur}. If $m$ is `even' (that
is, a multiple of $q-1$) and positive,
\begin{align*}
  \zeta _A(m)= -B_m\widetilde{\pi}^m/(q-1)\Pi(m).
\end{align*}
Here, $B_m \in K$ is a Bernoulli analogue, $\Pi(m) \in A$
is a factorial analogue, and 
\begin{align*}
  \widetilde{\pi}=(t-t^q)^{\frac{1}{q-1}}\prod_{n=1}^\infty\left(1-\frac{t^{q^n}
-t}{t^{q^{n+1}}-t}\right)
\in \mathbb{C}_\infty, 
\end{align*}
plays the role of $2\pi i$ and is known to be transcendental over $K$
\cite{Wade1941}. There is no functional equation known. But in fact, much is
known about the nature of the special values at positive integers in contrast
to the classical case. We have the following result due to Anderson, Thakur,
and Yu \cite{A-T1, Yu1991}: For $m$ positive, $\zeta_A(m)$ is transcendental
over $K$, and $\zeta_A(m)/\widetilde{\pi}^m$ is also transcendental if $m$ is
not
`even'. 

In this case, the analogue of the Riemann hypothesis is known
(\cite{Wan, DV1, Goss96, Sheats, Thakur,  BA_DV_VS2010}). Orders of vanishing
of
zeta at
negative integers are not yet fully understood (but see \cite{Thakur, Goss96,
DV2, BADVMB}). For the details of the analytic continuation due to Goss,
the theory of special 
values, its links with cyclotomic theory, periods of $t$-motives, etc., we
refer to \cite{Goss96, Thakur}. 

Now we turn to multizeta values.
 We refer to \cite{Waldschmidt}, and references in there, for a survey of many
exciting 
recent developments related to the  multizeta values introduced by Euler and their 
connections with theory of algebraic number fields. We will be concerned 
with an analogous theory of function fields.

Dinesh Thakur \cite[Section 5.10]{Thakur} introduced two types of multizeta
values for function fields over finite fields of characteristic $p$, one
complex valued (generalizing the Artin-Weil zeta function) and the other with
values in Laurent series over finite fields (generalizing the Carlitz-Goss zeta
function). In this paper, we only focus on the latter. For its properties,
connections with Drinfeld modules and Anderson $t$-motives, we refer the reader
to \cite{A-T2, Thakur,Thakur_Multizeta08, Thakur_Shuffle09}. 

Thakur  proves the existence of ``shuffle'' relations for the
multizeta values (for a general $A$ with a rational place at infinity)
\cite{Thakur_Shuffle09}. In particular, he shows that the product
of multizeta values can also be expressed as a sum of some multizeta values, so
that the $\F_p$-span of all multizeta values is an algebra. 
In the function field case, the identities are much more complicated than the
classical shuffle identities. In fact there are two types of identities, one
with $\F_p(t)$ coefficients and the other with $\F_p$ coefficients. 
Note that although for many purposes a good analogue of $\Q$ is $\F_q(t)$, 
the prime field in  characteristic $p$  is $\F_p$ as  $\Q$ is the prime
field in characteristic $0$. We concentrate only on the latter type.

The results in \cite{Thakur_Shuffle09}, although effective, are not explicit
and bypass the explicit conjectures formulated in \cite{Thakur_Multizeta08,
Jalr, Jalr10}. 
In this paper, we use the ideas of the process in \cite{Thakur_Shuffle09} to
prove the main conjecture formulated in \cite{Jalr, Jalr10}. In this paper,
several  conjectures for small values of $a$
(Theorems~\ref{Delta1b}, \ref{thm_conjecture2.1}, \ref{thm_conjecture2.6},
and \ref{thm_conjecture2.3}), and  a conjecture for special large values of
$a$ and $b$ (Theorem~\ref{large_indices}) are proved. 
Also, the parity conjecture (Theorem~\ref{parity_conjecture}) formulated
in \cite{Thakur_Multizeta08} is proved.

\section{Notation}

\begin{tabularx}{\linewidth}{l X}
$\Z$  & Ring of integers,\\
$\Z_+$ &  positive integers,\\
$q$ &  a power of a prime $p$, $q = p^s$,\\
$\F_q$ & a finite field of $q$ elements,\\
${t}$&  an independent variable,\\
$A$ & the polynomial ring $\F_q[t]$,\\
$A_+$ &   monic polynomials in  $A$,\\
$K$ &   the function field $\F_q(t)$,\\
$K_\infty$ & $ \F_q((1/t))$, the completion of $K$ at $\infty$,\\
$\C_\infty$ & completion of an algebraic clousure of $K_\infty$,\\
$A_d$ & $ \mbox{elements in }A \mbox{ of degree }d$,\\
$A_{d^+}$ & $ A_d \cap A_+$, \\
$[n]$ &   $= t^{q^n}-t$,\\
$\ell_n$ & $=  \prod _{i= 1} ^n (t-t^{q^i})= (-1)^n [n][n-1]\dotsm
[1]$, \\
`even' & $\mbox{multiple of } q-1$,\\
$\deg$ & $\mbox{function assigning to } a\in A \mbox{ its degree in } t$,\\
$\Int{x}$       & =   $\begin{cases}
                    0 & \mbox{ if } x \mbox{ is not integer}\\
                    1 & \mbox{ if } x \mbox{ is integer}\\
                    \end{cases}$ \\
$\lfloor x \rfloor$ & the largest integer not greater than $x$.
\end{tabularx}

\section{Thakur's multizeta values}

For $s\in\Z_+$, the \emph{Carlitz zeta values} \cite{Goss96, Thakur} are
defined as
\begin{align*}
\zeta(s) = \zeta_A(s):=\sum_{a\in A_+} \frac{1}{a^s} \in K_\infty.
\end{align*}

 For $s\in \Z$ and $d\geq 0$, we write
\begin{align*}
S_d(s) := \sum _{ a\in A_{d^+}} \frac{1}{a^{s}} \in K.
\end{align*}

Given integers $s_i\in \Z_+$ and $d\ge 0$ put
\begin{align*}
  S_d(s_1,\dotsc,s_r) = S_d(s_1) \sum _{d > d_2 >\dotsb > d_r \ge 0}
S_{d_2}(s_2) \dotsm S_{d_r}(s_r) \in K.
\end{align*}

For $s_i\in \Z_+$, Thakur's  \emph{multizeta values}  \cite{Thakur,
Thakur_Multizeta08} are defined by:
\begin{align*}
\zeta(s_1,\dotsc,s_r):= \sum _{d_1>\dotsb >d_r\ge 0} S_{d_1}(s_1) \dotsm
S_{d_r}(s_r)= \sum \frac{1}{a_1^{s_1}\dotsm a_r^{s_r} }\in K_\infty,
\end{align*}
where the second sum is over all $a_i\in A_+$ of degree $d_i$ such that
$d_1>\dotsb >d_r\ge 0$. We say that this multizeta value has depth $r$ and
weight  $\sum s_i$.

\section{Relations between multizeta values}

Recall that Euler's multizeta values  $\zeta$ (only in this paragraph, the
greek letter $\zeta$ will be
used to denote the
classical multizeta values) are defined by $\zeta(s_1, \dotsc, s_r) = \sum
\left(n_1^{s_1} \dotsm n_r^{s_r} \right)^{-1}$, where the sum is over positive
integers $n_1> n_2 >\dotsb >n_r$ and $s_i$ are positive integers, with $s_1>1$
(this condition is required for convergence). Since $n_1=n_2$, $n_1>n_2$ or
$n_2>n_1$, 
we  have the ``sum shuffle
relation''
\begin{align*}
\zeta(a)\zeta(b)&= \sum _{n_1=1 } ^\infty \frac{1}{n_1^a} \sum _{n_2=1 } ^\infty
\frac{1}{n_2^b}
= \sum _{n_1=n_2}\frac{1}{n_1 ^{a+b}} + \sum _{n_1>n_2}\frac{1}{n_1^a n_2^b} +
\sum _{n_2>n_1}\frac{1}{n_2^b n_1^a }\\
&= \zeta(a+b)+\zeta(a,b)+\zeta(b,a).
\end{align*}

In the function field case, this sum shuffle relation fails because there are
many polynomials of a given degree. In contrast to the classical sum shuffle,
in the function field case, the identities we get are much more involved.

For $s_1,s_2\in \Z_+$, put
\begin{align*}
S_d(s_1,s_2)=\sum _{\substack {d=d_1>d_2\\a_i\in
A_+}}\frac{1}{a_1^{s_1}a_2^{s_2}}
\end{align*}
where $d_i=\deg(a_i)$. For $a,b\in \Z_+$, we define
\begin{align*}
\Delta_d (a,b)=S_d(a)S_d(b)-S_d(a+b).
\end{align*}
We write $\Delta(a,b)$ for $\Delta_1(a,b)$. The definition implies
that $\Delta_d(a,b)=\Delta_d(b,a)$.

The next two theorems (the second theorem in the reference has 
implications to higher genus function fields, but we state only 
a special case relevant to us) are  due to Thakur \cite[Theorems
1, 2]{Thakur_Shuffle09}. 

\begin{theorem}
   Given $a,b \in \Z_+$, there are $f_i\in \F_p$ and $a_i\in
\Z_+$, so that
\begin{align}
\Delta_d(a,b) =  \sum f_i S_d(a_i,a+b-a_i) \label{Delta1}
\end{align}
holds for $d=1$.\hfill\qed
\end{theorem}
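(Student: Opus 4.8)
The plan is first to reduce the $d=1$ statement to a one-variable identity. By definition $S_1(a_i, a+b-a_i) = S_1(a_i)\sum_{1 > d_2 \ge 0} S_{d_2}(a+b-a_i) = S_1(a_i)\,S_0(a+b-a_i)$, and since the only monic polynomial of degree $0$ is $1$ we have $S_0(\cdot)=1$, so $S_1(a_i,a+b-a_i)=S_1(a_i)$. Hence proving \eqref{Delta1} for $d=1$ amounts to writing $\Delta_1(a,b)$ as an $\F_p$-linear combination of the single sums $S_1(c)$ with $1\le c\le a+b-1$.

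Next I would write out $\Delta_1(a,b)$ explicitly. The monic polynomials of degree $1$ are exactly $t+\theta$ with $\theta\in\F_q$, so $S_1(s)=\sum_{\theta\in\F_q}(t+\theta)^{-s}$. Expanding $S_1(a)S_1(b)$ and separating the diagonal $\theta=\phi$, whose contribution is precisely $S_1(a+b)$, gives
\begin{align*}
\Delta_1(a,b) = \sum_{\substack{\theta,\phi\in\F_q\\ \theta\ne\phi}} \frac{1}{(t+\theta)^a (t+\phi)^b}.
\end{align*}
The heart of the argument is a partial-fraction decomposition of each off-diagonal term. Since $\theta\ne\phi$, the difference $\delta=\theta-\phi$ is a nonzero element of $\F_q$, and there is a formal identity
\begin{align*}
\frac{1}{(t+\theta)^a (t+\phi)^b} = \sum_{i=1}^{a} \frac{\alpha_i(\delta)}{(t+\theta)^i} + \sum_{j=1}^{b} \frac{\beta_j(\delta)}{(t+\phi)^j},
\end{align*}
where $\alpha_i(\delta)$ and $\beta_j(\delta)$ are integer binomial coefficients times powers $\delta^{-(a+b-i)}$ and $\delta^{-(a+b-j)}$; because the numerators are integers they reduce modulo $p$ and the identity holds over $\F_q$. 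I would then sum over all pairs $\theta\ne\phi$ and interchange the (finite) orders of summation: holding $i$ and $\theta$ fixed and letting $\phi$ range over $\F_q\setminus\{\theta\}$, the difference $\delta$ runs exactly once over $\F_q^\times$, so the coefficient of $(t+\theta)^{-i}$ becomes $\sum_{\delta\in\F_q^\times}\alpha_i(\delta)$, which is independent of $\theta$; summing $(t+\theta)^{-i}$ over $\theta$ then reassembles $S_1(i)$, and symmetrically the second family reassembles the $S_1(j)$.

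What forces the coefficients into the prime field $\F_p$ — the whole point of this type of relation — is the elementary power-sum evaluation $\sum_{\delta\in\F_q^\times}\delta^{m}=-1$ when $(q-1)\mid m$ and $0$ otherwise, applied with $m=a+b-i$, which is positive throughout the range. After the bijection $\delta\mapsto\delta^{-1}$ turns the negative exponents into positive ones, each surviving coefficient equals $\pm$ a binomial coefficient (nonzero only when $c\equiv a+b \pmod{q-1}$), hence lies in $\F_p$. Collecting terms yields $\Delta_1(a,b)=\sum_c f_c\, S_1(c)=\sum_c f_c\, S_1(c,a+b-c)$ with $f_c\in\F_p$ and $1\le c\le a+b-1$, the desired form of \eqref{Delta1}. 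The only step requiring genuine care is the characteristic-$p$ bookkeeping: verifying that the binomial coefficients in the partial-fraction expansion are correct modulo $p$ and that the index ranges stay inside $[1,a+b-1]$, so that no spurious $S_1(a+b)$ term is reintroduced; beyond this, the analytic content is exactly the finite-field power sum.
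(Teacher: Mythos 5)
Your argument is correct, and it takes a genuinely different route from the one the paper relies on. The paper quotes Thakur for this statement and, in the proof of Theorem~\ref{parity_conjecture}, reproduces his method: formula \eqref{s_1(k+1)} exhibits each $S_1(s)$ as a polynomial in $U=1/[1]$ with $\F_p$-coefficients, with exponents congruent to $s$ modulo $q-1$ and leading term $(-1)^sU^s$; hence $\Delta(a,b)$ is such a polynomial of degree $<a+b$, and the terms $f_iS_1(a_i)$ are peeled off by descending induction on the $U$-degree. You instead reduce $S_1(c,a+b-c)$ to $S_1(c)$ (correctly, since $S_0\equiv 1$), apply Euler's partial-fraction decomposition to each off-diagonal term $1/\bigl((t+\theta)^a(t+\phi)^b\bigr)$ --- legitimate in characteristic $p$ because the identity, once cleared of denominators, is an identity in $\Z[t,\delta]$ with integral binomial coefficients --- and then let the power sum $\sum_{\delta\in\F_q^\times}\delta^{-(a+b-i)}$ collapse each coefficient to $0$ or to a signed binomial coefficient in $\F_p$. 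Your indices $c$ stay in $[1,\max(a,b)]$, so no spurious $S_1(a+b)$ appears and $a+b-c\ge 1$, as needed. Your route buys more than the statement asks: the surviving indices automatically satisfy $(q-1)\mid(a+b-a_i)$, so the parity assertion of Theorem~\ref{parity_conjecture} comes for free, and the coefficients $f_i$ are obtained in closed form rather than by recursive elimination. What the paper's $U$-basis argument buys is the triangularity that makes existence a short induction and that the later sections keep exploiting; note also that your partial-fraction device is specific to $d=1$, where distinct monics of the same degree differ by nonzero constants, but that is all the theorem requires, the lift to general $d$ being delegated in both approaches to Theorem~\ref{shuffle_thm2}.
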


\begin{theorem}\label{shuffle_thm2}
 Fix $A$.  If \eqref{Delta1} holds for some $f_i\in  \F_p^\times$ and distinct
$a_i \in \Z_+$
for $d=1$, then \eqref{Delta1} holds for all $d\ge 0$. In this case, we have the
shuffle relation
\begin{align*}
   \zeta(a)\zeta(b) -\zeta(a+b) -\zeta(a,b) - \zeta(b,a)= \sum f_i
\zeta(a_i,a+b-a_i).
\end{align*}
\hfill\qed
\end{theorem}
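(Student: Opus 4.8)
The plan is to prove Theorem~\ref{shuffle_thm2}, which has two assertions: first, that the $d=1$ relation \eqref{Delta1} propagates to all $d\ge 0$; and second, that summing these per-degree relations over all degrees yields the stated shuffle relation for $\zeta(a)\zeta(b)$. Let me sketch both parts.

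Let me start with the propagation claim. The key structural fact is that the power sum $S_d(s)=\sum_{a\in A_{d^+}}a^{-s}$ has a recursive description in terms of lower degrees, because a monic polynomial of degree $d$ is obtained from one of degree $d-1$ by the substitution $a\mapsto ta+c$ with $c$ ranging over $\F_q$ (or more precisely by partitioning $A_{d^+}$ according to reduction). So my first step would be to write down this recursion relating $S_d(s)$ and $S_d(a,b)$ to the degree-$(d-1)$ quantities...

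Actually, let me think about what's really going on here before I commit to the recursion.

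The plan is to prove the theorem in two stages. The first stage reduces the multizeta shuffle relation to the family of power-sum identities \eqref{Delta1} holding at every degree, and is routine; the second stage establishes the propagation of \eqref{Delta1} from $d=1$ to all $d$, which is the heart of the matter. For the reduction I would work in the complete field $K_\infty$, where $\zeta(s)=\sum_{d\ge 0}S_d(s)$ converges because $\deg_t S_d(s)\to-\infty$; since the general term tends to $0$, the non-archimedean absolute convergence lets me rearrange freely. Splitting the product $\zeta(a)\zeta(b)$ according to whether the two summation degrees satisfy $d_1>d_2$, $d_1<d_2$, or $d_1=d_2$ gives
\[
\zeta(a)\zeta(b)=\zeta(a,b)+\zeta(b,a)+\sum_{d\ge 0}S_d(a)S_d(b).
\]
The diagonal term is $\sum_{d}S_d(a)S_d(b)=\sum_{d}\bigl(\Delta_d(a,b)+S_d(a+b)\bigr)=\zeta(a+b)+\sum_{d}\Delta_d(a,b)$. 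Thus, once \eqref{Delta1} is known for every $d$, summing it over $d$ and using $\sum_{d}S_d(s_1,s_2)=\zeta(s_1,s_2)$ (which holds by regrouping the defining sum of $\zeta(s_1,s_2)$ by its top degree $d_1$) yields $\sum_{d}\Delta_d(a,b)=\sum_i f_i\,\zeta(a_i,a+b-a_i)$, which is precisely the asserted shuffle relation. So everything rests on the per-degree identity.

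For the propagation I would exploit the digit bijection $A_{d^+}\cong A_{(d-1)^+}\times\F_q$ given by $a=ta'+\beta$. Writing $\Delta_d(a,b)=\sum_{h\ne k,\ \deg h=\deg k=d}h^{-a}k^{-b}$ as a sum over distinct ordered pairs, I split the pairs according to whether $h-k$ is a nonzero constant (so $h,k$ share the same $a'$) or $\deg(h-k)\ge 1$. The decisive point is that the base identity \eqref{Delta1} at $d=1$ is an identity of \emph{rational functions} in $t$: since $S_1(a_i,a+b-a_i)=S_1(a_i)\cdot S_0(a+b-a_i)=S_1(a_i)$ (as $S_0\equiv 1$), it reads $\Delta_1(a,b)=\sum_i f_i\,S_1(a_i)$ as an equation in $K$, and may therefore be substituted $t\mapsto ta'$. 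Summing the substituted identity over $a'\in A_{(d-1)^+}$ collapses the constant-difference part of $\Delta_d$ to $\sum_i f_i\sum_{a'}\sum_{\beta}(ta'+\beta)^{-a_i}=\sum_i f_i\,S_d(a_i)$, which is exactly the $e=0$ term of the target $\sum_i f_i\,S_d(a_i)\sum_{e=0}^{d-1}S_e(a+b-a_i)$.

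What remains is to match the off-diagonal part $Q_d$, the pairs with $\deg(h-k)\ge 1$, against the tail $\sum_i f_i\,S_d(a_i)\sum_{e=1}^{d-1}S_e(a+b-a_i)$, and this is where I expect the main obstacle to lie. A naive induction on $d$ does not close: after applying the digit bijection, $Q_d$ is an off-diagonal sum over $A_{(d-1)^+}$ carrying the inner $\F_q$-coset weights $\sum_{\beta}(ta'+\beta)^{-s}$ rather than the pure powers $(a')^{-s}$ appearing in $\Delta_{d-1}$, so the reduced object is not literally a lower-degree instance of the same relation. Overcoming this requires either a strengthened inductive statement or a nested application of the base identity, peeling off one digit at a time, together with explicit control of the coset sums $\sum_{\beta\in\F_q}(x+\beta)^{-s}$ through the characteristic-$p$ (Carlitz-type) binomial identities; it is precisely this nesting that forces the \emph{same} coefficients $f_i$ and indices $a_i$ to reappear at every degree. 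The hypotheses that the $a_i$ be distinct and the $f_i$ lie in $\F_p^\times$ guarantee that \eqref{Delta1} is in reduced form, so that this propagation is unambiguous.
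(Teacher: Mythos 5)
First, a remark on the comparison itself: the paper does not prove this statement — it is quoted without proof (the statement carries a q.e.d.\ box) from Thakur's shuffle paper \cite{Thakur_Shuffle09}, where it is Theorem 2. So there is no in-paper argument to measure yours against, and your proposal has to stand on its own. It does so only partially.

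The first half of your proposal is correct and routine: in $K_\infty$ the defining series converge since the general term tends to $0$, the trichotomy $d_1>d_2$, $d_1<d_2$, $d_1=d_2$ gives $\zeta(a)\zeta(b)=\zeta(a,b)+\zeta(b,a)+\sum_d S_d(a)S_d(b)$, and summing \eqref{Delta1} over $d$ then yields the displayed shuffle relation. Your treatment of the constant-difference pairs is also correct and is a nice observation: since $S_0\equiv 1$, the $d=1$ hypothesis reads $\Delta_1(a,b)=\sum_i f_i S_1(a_i)$ as an identity of rational functions of $t$, so substituting $t\mapsto ta'$ and summing over $a'\in A_{(d-1)^+}$ identifies the sum over pairs $h\ne k$ in $A_{d^+}$ with $h-k\in\F_q^\times$ with the $e=0$ term $\sum_i f_i S_d(a_i)$ of the target $\sum_i f_i S_d(a_i)\sum_{e=0}^{d-1}S_e(a+b-a_i)$.

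However, the theorem is precisely the assertion that the remaining pairs — those with $\deg(h-k)\ge 1$ — contribute $\sum_i f_i S_d(a_i)\sum_{e=1}^{d-1}S_e(a+b-a_i)$ with the \emph{same} coefficients $f_i$ and indices $a_i$, and at exactly this point you stop. You correctly diagnose that the naive induction does not close (after one application of the digit bijection the reduced object carries coset weights $\sum_{\beta}(ta'+\beta)^{-s}$ rather than pure powers, so it is not a lower-degree instance of the same relation), but you then only gesture at ``a strengthened inductive statement or a nested application of the base identity'' without formulating either or showing that any such scheme terminates in the claimed identity. That is an accurate description of where the entire difficulty of the propagation lives, not a proof of it; nothing in the proposal explains why the positive-degree-difference pairs reorganize themselves according to the depth-two sums $S_d(a_i)S_e(a+b-a_i)$. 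The heart of the theorem is therefore missing, and the argument as written has a genuine gap.
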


\begin{remark}
For each $a,b\in \Z_+$ the set $S(a,b)$ of pairs $(f_i,a_i)$ is
independent of $d$.  Notice that $S(a,b) = S(b,a)$.
\end{remark}

\section{The `even' restriction}

In \cite{Thakur_Multizeta08}, Thakur conjectures that in the multizeta value
identities all the iterated indices are `even' and gives some heuristics
reason for it.  The following theorem  proves the parity conjecture of
Thakur. A different proof is given in \cite{Jalr11}, but it is much 
more involved.

\begin{theorem}\label{parity_conjecture}
  Let $A = \F_q[t]$. Given $a,b \in \Z_+$, there are $f_i\in \F_p$ and $a_i\in
\Z_+$, so that
\begin{align}
S_1(a) S_1(b) - S_1(a+b) = \sum f_i S_1(a_i). \label{Delta}
\end{align}
Moreover, the $a_i$'s are such that $a+b-a_i$ are `even'.
\end{theorem}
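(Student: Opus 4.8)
The plan is to compute $S_1(a)S_1(b)-S_1(a+b)$ directly by partial fractions and to read off the parity from the vanishing of power sums over $\F_q^\times$. Since $A_{1^+}=\{t+\theta:\theta\in\F_q\}$, we have $S_1(s)=\sum_{\theta\in\F_q}(t+\theta)^{-s}$, and in the product $S_1(a)S_1(b)$ the diagonal terms $\theta=\phi$ cancel $S_1(a+b)$ exactly, leaving
\begin{align*}
S_1(a)S_1(b)-S_1(a+b)=\sum_{\theta\neq\phi}\frac{1}{(t+\theta)^a(t+\phi)^b},
\end{align*}
the sum running over ordered pairs $(\theta,\phi)\in\F_q^2$ with $\theta\neq\phi$.

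First I would decompose each summand by partial fractions in $t$. Because $\theta\neq\phi$ the two linear factors are coprime, so
\begin{align*}
\frac{1}{(t+\theta)^a(t+\phi)^b}=\sum_{i=1}^{a}\frac{A_i}{(t+\theta)^i}+\sum_{j=1}^{b}\frac{B_j}{(t+\phi)^j},
\end{align*}
where a short computation (expanding $(t+\phi)^{-b}$ about $t=-\theta$) gives $A_i=(-1)^b\binom{a+b-i-1}{a-i}(\theta-\phi)^{\,i-a-b}$, and symmetrically for $B_j$. The key structural point is that the dependence on the pair $(\theta,\phi)$ enters only through the difference $\theta-\phi$.

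Next I would interchange the two finite summations and collect the coefficient of each pole $(t+\theta)^{-i}$. Fixing $\theta$ and letting $\phi$ range over $\F_q\setminus\{\theta\}$, the quantity $c=\theta-\phi$ ranges over $\F_q^\times$, so the coefficient of $(t+\theta)^{-i}$ equals $(-1)^b\binom{a+b-i-1}{a-i}\sum_{c\in\F_q^\times}c^{\,i-a-b}$. Here I would invoke the standard power-sum identity $\sum_{c\in\F_q^\times}c^{m}=-1$ when $(q-1)\mid m$ and $=0$ otherwise: this forces every surviving index $i$ to satisfy $(q-1)\mid(a+b-i)$, that is, $a+b-i$ is `even'. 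Summing the surviving terms over $\theta$ then reassembles $\sum_\theta(t+\theta)^{-i}=S_1(i)$, and the resulting coefficient $-(-1)^b\binom{a+b-i-1}{a-i}$ lies in $\F_p$ because binomial coefficients reduce into the prime field and the power sum contributes $-1$. The poles at $t=-\phi$ are handled by the identical computation with $a$ and $b$ interchanged, yielding the same parity constraint.

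Assembling these contributions gives $S_1(a)S_1(b)-S_1(a+b)=\sum f_i\,S_1(a_i)$ with $f_i\in\F_p$ and $(q-1)\mid(a+b-a_i)$, which is \eqref{Delta} together with the asserted parity; since $a_i=i$ satisfies $1\le a_i\le\max(a,b)<a+b$, each $a+b-a_i$ is in fact a positive multiple of $q-1$. I expect the only delicate point to be the bookkeeping of the partial-fraction coefficients and the justification for the reindexing $\phi\mapsto c=\theta-\phi$; everything else is a finite manipulation in characteristic $p$. The conceptual heart — and the reason parity appears at all — is precisely that summing over $\phi$ produces a power sum over $\F_q^\times$, whose nonvanishing is governed exactly by divisibility by $q-1$.
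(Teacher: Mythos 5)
Your proof is correct, and it takes a genuinely different route from the paper's. The paper works on the ``$U$-side'': it quotes the closed formula for $S_1(k+1)$ as an $\F_p$-polynomial in $U=1/[1]$ whose exponents are all congruent to $k+1$ modulo $q-1$, observes that $S_1(a)S_1(b)-S_1(a+b)$ is therefore a polynomial in $U$ of degree less than $a+b$ all of whose exponents $n$ satisfy $(q-1)\mid(a+b-n)$, and then runs a greedy descent, repeatedly subtracting $f_1S_1(a_1)$ for the top-degree term and noting that the congruence class of the exponents is preserved. You instead work on the ``$t$-side'': partial fractions turn the off-diagonal sum $\sum_{\theta\ne\phi}(t+\theta)^{-a}(t+\phi)^{-b}$ into polar parts whose coefficients depend on $(\theta,\phi)$ only through $\theta-\phi$, and summing over that difference produces the power sum $\sum_{c\in\F_q^\times}c^{i-a-b}$, whose vanishing criterion is exactly the `even' condition. (I checked your coefficient: $A_i=\binom{-b}{a-i}(\phi-\theta)^{i-a-b}=(-1)^{a-i}\binom{a+b-i-1}{a-i}(\phi-\theta)^{i-a-b}$, which agrees with your expression after converting $(\theta-\phi)^{i-a-b}$ to $(\phi-\theta)^{i-a-b}$; and the binomial series manipulation is legitimate in characteristic $p$ since the coefficients are integers.) What your approach buys is an explicit closed form for the relation --- the surviving indices are $a_i=i$ with $(q-1)\mid(a+b-i)$ and $i\le a$ or $i\le b$, with coefficients $-(-1)^b\binom{a+b-i-1}{a-i}$ and the symmetric counterpart --- with no reliance on Thakur's formula \eqref{s_1(k+1)} and no termination argument. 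What the paper's approach buys is brevity and alignment with the existence proof in \cite{Thakur_Shuffle09} that it is refining, at the cost of producing the $f_i$ only algorithmically. The only bookkeeping point worth making explicit in a write-up is that an index $i\le\min(a,b)$ receives contributions from both families of poles, so the two coefficients must be added in $\F_p$; this does not affect the parity conclusion.
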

\begin{proof}
The first part is proved in \cite{Thakur_Shuffle09}. Here, we shall prove the
second part following the proof in there.
By specializing \cite[3.3]{Thakur_Multizeta08} to $d=1$ we
see that 
\begin{align}
  S_1(k+1) =\frac{(-1)^{k+1}}{[1]^{k+1}} \left(
1 + \sum _{k_1=1}^{\lfloor k/q \rfloor} \binom{k-k_1(q-1)}{k_1}(-1)^{k_1}
[1]^{k_1(q-1)}
\right).\label{s_1(k+1)}
\end{align}
To simplify a little bit the notation, let us make the change $ U
\longleftrightarrow \frac{1}{[1]}$. Equation \eqref{s_1(k+1)} becomes
\begin{align*}
S_1(a ) & = (-1)^{a} U^{a}
\left(1 +
\sum _{i=1}^{ n_a } \binom{ a-1-i(q-1) }{i} (-1)^{i}
U^{-i(q-1)}
\right) \\
& = \alpha _{a,0} U ^{\varphi_a(0)} + \alpha_{a,1}U ^{\varphi_a(1)} +
\dotsb + \alpha_{a,n_a} U^ {\varphi_{a}(n_a)},
\end{align*}
where $\varphi_a(i) = a- i(q-1)$, $0 \le i  \le n_a = { \lfloor (a-1)/q \rfloor
}$, and
\begin{align*}
\alpha_{a,i} =  \begin{cases}
                        (-1)^a & \mbox{if } i=0, \\
\binom{ \varphi_{a-1}(i) }{i} (-1)^{a+i} & \mbox{for } i = 1,  \dotsc, n_a.
                      \end{cases}
\end{align*}

Thus, $S_1(a)$ is a $\F_p$-linear combination of powers of $U$ and, therefore,
so is $\Delta(a,b)$. More precisely, since $\varphi_a(i) +
\varphi_b(j) = \varphi_{a+b}(i+j)$, and $\alpha_{a,0}
\alpha_{b,0} = \alpha_{a+b,0} = (-1)^{a+b}$, we have
\begin{multline*}
S_1(a) S_1(b) -S_1(a+b)    = \\
\left( \sum _{i=0}^{n_a} \alpha_{a,i} U^{\varphi_a(i)} \right)
\left( \sum _{j=0}^{n_b} \alpha_{b,j} U^{\varphi_b(j)} \right)
- \sum _{l=0}^{n_{a+b}} \alpha_{a+b,l} U^{\varphi_{a+b}(l)} \\
 = \sum _{k=0}^{n_a+n_b} \beta_k U^{\varphi_{a+b}(k)}
- \sum _{l=0}^{n_{a+b}} \alpha_{a+b,l} U^{\varphi_{a+b}(l)}\\
= \sum _{k=1}^{n_a+n_b} \beta_k U^{ \varphi_{a+b}(k)}
- \sum _{k=1}^{n_{a+b}} \alpha_{a+b,k} U^{\varphi_{a+b}(k)}.
\end{multline*}
where $\beta_k = \sum_{i+j=k} \alpha_{a,i}\alpha_{b,j}$. Notice that
$$a+b - \varphi_{a+b}(k) =(a+b)-(a+b) +  k(q-1) = k(q-1).$$
Therefore, $ S_1(a) S_1(b) -S_1(a+b)$ is a polynomial in
$U$ with coefficients in $\F_p$  of degree less than $a+b$, such that
$a+b-i$  is `even' for every power $i$ of $U$. Write
\begin{align*}
   S_1(a) S_1(b) -S_1(a+b)  = \theta_n U^n + \dotsb + \theta_0 U ^0,
\end{align*}
where $\theta_n \ne 0$, $n < a+b$. Let $f_1 = (-1)^{n}\theta_n$ and $a_1 = n$.
Each power of $U$ in $S_1(a_1)$ is of the form $\varphi_{a_1}(i) = a_1 -i(q-1)$.
Since $q-1$ divides $a+b-a_1$, it divides $a+b-a_1+i(q-1) = a + b -
\varphi_{a_1}(i)$. Then, $S_1(a) S_1(b) -S_1(a+b) - f_1 S_1(a_1)$ is again a
polynomial in $U$ with coefficients in $\F_p$ of degree less than $n$, and each
power of $U$ satisfies the `even' condition. We continue in this way,
inductively, untill the sum is vacuous.
\end{proof}

\begin{theorem}
  Fix $q$. Let $K$ be a function field of one variable with field of constants
$\F_q$; let $\infty$ be a place of $K$ of degree one, and let $A$ be the ring of
elements of $K$ with no poles outside $\infty$. Given $a,b\in \Z_+$, there are
$f_i\in \F_p$ and $a_i\in  \Z_+$ such that
$$ \zeta(a)\zeta(b) -\zeta(a+b) -\zeta(a,b) - \zeta(b,a)= \sum f_i
\zeta(a_i,a+b-a_i),$$
with $a+b-a_i$ `even'.
\end{theorem}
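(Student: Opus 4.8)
The plan is to deduce the $\zeta$-identity from its degree-one incarnation through Theorem~\ref{shuffle_thm2}, and to establish the parity of the indices at that degree-one level. Note first that Theorem~\ref{shuffle_thm2} is stated for an arbitrary such $A$: as soon as \eqref{Delta1} holds at $d=1$ with $f_i\in\F_p^\times$ and distinct $a_i$, the same data yield the asserted shuffle relation for $\zeta(a)\zeta(b)$. So the whole problem reduces to producing, at $d=1$, a relation of the required shape with $a+b-a_i$ `even'. Since $\infty$ has degree one, the only monic element of degree $0$ is $1$; hence $A_{0^+}=\{1\}$, $S_0(s)=1$, and $S_1(a_i,a+b-a_i)=S_1(a_i)S_0(a+b-a_i)=S_1(a_i)$. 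Thus \eqref{Delta1} at $d=1$ is literally \eqref{Delta}, and it suffices to prove \eqref{Delta} with $a+b-a_i$ `even' for a general $A$.

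To prove \eqref{Delta} I would exploit the $\F_q^\times$-symmetry that underlies the word `even'. If $A_{1^+}=\emptyset$ then $S_1(s)=0$ for all $s$ and \eqref{Delta} is the empty, vacuously `even', relation, so suppose $x\in A_{1^+}$. As $x$ has a single simple pole at $\infty$, the morphism to $\mathbb{P}^1$ it defines has degree one, whence $K=\F_q(x)$ and $A=\F_q[x]$. For $\mu\in\F_q^\times$ the assignment $\sigma_\mu\colon x\mapsto\mu x$ is an $\F_q$-automorphism of $K$ fixing $A$ and $\infty$, and a direct computation on the defining sum gives $\sigma_\mu\bigl(S_1(s)\bigr)=\mu^{-s}S_1(s)$. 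Because $p\nmid q-1$, the group algebra $\F_q[\F_q^\times]$ is semisimple (Maschke) and $K$ splits into eigenspaces $K=\bigoplus_{\rho\in\Z/(q-1)}K_\rho$, with $K_\rho=\{\,v:\sigma_\mu v=\mu^{\rho}v \text{ for all }\mu\,\}$; here $S_1(s)\in K_{-s}$, and weights add under multiplication.

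I would then apply this to a base relation $S_1(a)S_1(b)-S_1(a+b)=\sum f_i S_1(a_i)$, which exists by the first theorem quoted above from \cite{Thakur_Shuffle09}. Its left-hand side lies in $K_{-(a+b)}$ since weights add, so projecting the right-hand side onto that eigenspace deletes every term with $a_i\not\equiv a+b\pmod{q-1}$: grouped by residue class, the terms in each class other than $a+b$ form a homogeneous component of the left-hand side of the wrong weight and therefore sum to zero. What remains is a relation of the same form in which every index satisfies $a_i\equiv a+b\pmod{q-1}$, that is, $a+b-a_i$ is `even'; this is \eqref{Delta} with the parity condition, obtained without recourse to the explicit expansion \eqref{s_1(k+1)}. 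Feeding these $f_i$ and $a_i$ into Theorem~\ref{shuffle_thm2} propagates \eqref{Delta1} to all $d\ge 0$ and assembles the stated identity, with $a+b-a_i$ `even' throughout.

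The delicate point is the parity extraction, and with it the scope of the reduction. The argument is clean precisely because a degree-one pole forces $K$ to be rational, so the scaling automorphisms $\sigma_\mu$ genuinely exist and the eigenspace splitting is immediate; this settles every case with $A_{1^+}\ne\emptyset$. When $A_{1^+}=\emptyset$---higher genus with a rational place at $\infty$---the degree-one relation \eqref{Delta} is empty, and one must instead invoke the full strength of the shuffle theorem of \cite{Thakur_Shuffle09} from the least degree $d_0$ with $A_{d_0^+}\ne\emptyset$; re-establishing the base relation and its parity there, where no scaling automorphism is at hand, is where any substantive extra work would lie. By contrast the reduction $S_1(a_i,a+b-a_i)=S_1(a_i)$ and the propagation to all $d$ are purely formal once the parity-refined \eqref{Delta} is secured.
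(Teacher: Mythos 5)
Your parity extraction for $A=\F_q[t]$ is correct, and it is a genuinely different mechanism from the paper's. The paper expands $S_1(a)$ explicitly as a polynomial in $1/[1]$ via \eqref{s_1(k+1)}, observes that every exponent occurring in $S_1(a)S_1(b)-S_1(a+b)$ is congruent to $a+b$ modulo $q-1$, and then runs a greedy descent, subtracting $f_1S_1(a_1)$ for the top-degree term and iterating. Your route instead starts from an existing relation supplied by Thakur's first theorem, uses the equivariance $\sigma_\mu\bigl(S_1(s)\bigr)=\mu^{-s}S_1(s)$ under $x\mapsto \mu x$, and projects onto the weight $-(a+b)$ eigenspace to delete all terms with $a_i\not\equiv a+b \pmod{q-1}$. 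This is clean and valid (the order of $\F_q^\times$ is prime to $p$ and $\F_q$ contains all $(q-1)$-th roots of unity, so the eigenspace decomposition and the distinctness of weights are unproblematic), and it avoids the explicit formula entirely. That part stands.

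The genuine gap is the higher-genus case, which is part of the statement and which you explicitly leave unresolved. When $A_{1^+}=\emptyset$ your argument produces nothing, and your proposed fallback (re-establish a base relation with parity at the least $d_0$ with $A_{d_0^+}\ne\emptyset$, inside the higher-genus ring) is not carried out and is not how the cited result works. The paper's proof never computes inside a higher-genus $A$: it invokes the general version of Thakur's second theorem from \cite{Thakur_Shuffle09} (alluded to in the parenthetical remark preceding Theorem~\ref{shuffle_thm2}), whose content is that the single relation \eqref{Delta1} verified at $d=1$ for $A=\F_q[t]$ propagates, with the \emph{same} $f_i$ and $a_i$, to all $d\ge 0$ and to every $A$ of the stated kind. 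Hence the data $(f_i,a_i)$ are always the ones produced for $\F_q[t]$, the `even' condition is inherited for free, and no new base case is needed. Your reading of Theorem~\ref{shuffle_thm2} as requiring a $d=1$ verification inside the given $A$ is what manufactures the obstruction; note that this literal reading cannot be right anyway, since for $A_{1^+}=\emptyset$ one has $S_1\equiv 0$ and \eqref{Delta1} at $d=1$ would hold vacuously for arbitrary data. The repair is to cite the correct, stronger transport statement rather than to redo the computation in higher genus.
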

\begin{proof}
By the above theorem, there are $f_i\in \F_p$ and $a_i\in \Z_+$ such that
\eqref{Delta} holds, and $a+b-a_i$ are `even'. By
Theorem~\ref{shuffle_thm2}, or rather by its more general version 
in \cite{Thakur_Shuffle09} we have for general $A$ as above that 
\begin{align*}
S_d(a)S_d(b)- S_d(a+b) = \sum f_i S_d(a_i,a+b-a_i)
\end{align*}
holds for all $d\ge 0$. 
\end{proof}

\section{A relation for large indices}

In this section we shall prove the Conjecture 2.8.1 formulated in
\cite{Jalr, Jalr10} in two different ways. In \cite{Jalr11}, there is a third
proof
of this result.
 
Lucas Theorem  \cite{Lucas1878a, Lucas1878, Fine1947} gives a method to
determine the value of the binomial coefficient $\binom{m}{n}$ modulo a prime
number $p$:
\begin{align*}
  \binom{m}{n} \equiv \binom{m_0}{n_0} \dotsm \binom{m_k}{n_k} \bmod p,
\end{align*}
where $m = m_0 + m_1 p + \dotsb + m_k$ and  $n = n_0 + n_1 p + \dotsb + n_k$
are the base $p$ expansions of $m$ and $n$, respectively. Since
$\binom{a}{b}=0$ if $b>a$, Lucas Theorem implies that the binomial coefficient
$\binom{m}{n}$ does not vanish modulo $p$ if and only if there is no carry
over in base $p$ in the sum of $n$ and $m-n$.

\begin{proposition}\label{prop1}
For general $q$, we have
\begin{align}
  S_1(2q^n-1)
= -\frac{[n+1]}{[1]^{2q^n}}.\label{special_case3}
\end{align}
\end{proposition}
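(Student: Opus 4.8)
The plan is to substitute into the explicit expansion \eqref{s_1(k+1)} and reduce the whole statement to a binomial-coefficient congruence modulo $p$. First I would set $k=2q^n-2$, so that the upper summation limit becomes $\lfloor k/q\rfloor = 2q^{n-1}-1$ and $(-1)^{k+1}=-1$. Then \eqref{s_1(k+1)} reads $S_1(2q^n-1) = -[1]^{-(2q^n-1)}\,\Sigma$, where $\Sigma = 1 + \sum_{k_1=1}^{2q^{n-1}-1}\binom{2q^n-2-k_1(q-1)}{k_1}(-1)^{k_1}[1]^{k_1(q-1)}$. Comparing with the target \eqref{special_case3}, and clearing the power of $[1]$, the claim becomes the identity $\Sigma = [n+1]/[1]$.

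Next I would rewrite the right-hand side. Using the Frobenius telescoping $\sum_{i=0}^{n}[1]^{q^i} = \sum_{i=0}^{n}\bigl(t^{q^{i+1}}-t^{q^i}\bigr) = t^{q^{n+1}}-t = [n+1]$, one gets $[n+1]/[1] = \sum_{i=0}^{n}[1]^{q^i-1}$. The exponents $q^i-1 = (q-1)(1+q+\dots+q^{i-1})$ are exactly the exponents $k_1(q-1)$ appearing in $\Sigma$, namely for $k_1 = r_i := (q^i-1)/(q-1)$. Matching the coefficients of equal powers of $[1]$ therefore reduces the proposition to showing that $(-1)^{k_1}\binom{2q^n-2-k_1(q-1)}{k_1} \equiv 1 \pmod p$ when $k_1=r_i$ for some $1\le i\le n$, and $\equiv 0$ otherwise.

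The core of the argument is this congruence, which I would treat with Lucas' Theorem. Writing the complement $M := \bigl(2q^n-2-k_1(q-1)\bigr)-k_1 = 2q^n-2-qk_1$, the coefficient is nonzero modulo $p$ precisely when the base-$p$ addition $k_1+M$ has no carries. Expanding $k_1 = \sum_{j=0}^{n-1}c_j q^j$ (with $c_j\in\{0,\dots,q-1\}$ and $c_{n-1}\in\{0,1\}$, since $k_1<2q^{n-1}$), a borrow-free subtraction gives the base-$q$ digits of $M$: namely $q-2$ at position $0$, then $q-1-c_{j-1}$ at positions $1\le j\le n-1$, and $1-c_{n-1}$ at position $n$. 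Requiring no carry already at the coarser base-$q$ level forces $c_0\le 1$ and $c_j\le c_{j-1}$, so the digits are non-increasing and lie in $\{0,1\}$; hence $k_1$ must be a base-$q$ repunit $1+q+\dots+q^{i-1}=r_i$. This eliminates every non-repunit $k_1$ at once.

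For $k_1=r_i$ one has $(q-1)k_1=q^i-1$, so the top entry is $N=2q^n-1-q^i$, whose base-$q$ digits are $1$ at position $n$, $q-2$ at position $i$, and $q-1$ at the remaining positions below $n$. The $i$ base-$p$ digits of $k_1$ that equal $1$ each sit opposite a digit $q-1$ of $N$, whose lowest base-$p$ digit is $p-1$, so Lucas' Theorem yields $\binom{N}{r_i}\equiv \binom{p-1}{1}^{\,i}=(-1)^i\pmod p$; since $r_i\equiv i\pmod 2$, the sign $(-1)^{r_i}=(-1)^i$ cancels it and the coefficient is $1$, as required. I would also record the range check $r_i\le 2q^{n-1}-1\iff i\le n$, so that exactly $i=1,\dots,n$ contribute, matching $\sum_{i=0}^n[1]^{q^i-1}$ (the $i=0$ term being the leading $1$ of $\Sigma$). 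The main obstacle is the digit bookkeeping of the previous paragraph: verifying that the subtraction producing $M$ is genuinely borrow-free and that base-$p$ (not merely base-$q$) carry-freeness is correctly tracked, together with the characteristic-two case, where $-1=1$ makes the signs degenerate while the Lucas computation still returns $1$.
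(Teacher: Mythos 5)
Your proposal is correct and follows essentially the same route as the paper: substitute $k=2q^n-2$ into \eqref{s_1(k+1)}, use Lucas' Theorem to show that the no-carry condition forces $k_1$ to be a base-$q$ repunit $1+q+\dots+q^{i-1}$, evaluate the surviving coefficients as $(-1)^i$ with the sign cancelling against $(-1)^{k_1}$, and telescope $\sum_{i=0}^{n}[1]^{q^i}=[n+1]$. The only (cosmetic) difference is that you run the carry analysis at the base-$q$ level before refining to base $p$, whereas the paper tracks the base-$p$ digits directly throughout.
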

\begin{proof}

We shall prove that for any $k_1$, $0 \le k_1 \le 2q^{n-1}-1$,
\begin{align*}
 \binom{2q^n-2-k_1(q-1)}{k_1}(-1)^{k_1} =
\begin{cases}
  1 & \mbox{if } k_1 \in \{0,1,1+q, \dotsc, 1+q+ \dotsb +q^{n-1} \}, \\
0 & \mbox{otherwise.}
\end{cases}
\end{align*}
By Lucas Theorem, only the terms where there is no carry over in base $p$ in the
sum
of $k_1$ and $2q^n-2-k_1q$ need to be considered.
The  base $p$ expansion of   $2q^{n-1}-1$ is
\begin{align*}
2q^{n-1}-1  = (p-1) + (p-1)p + \dotsb + (p-1)p^{s(n-1)-1} + p^{s(n-1)}.
\end{align*}

Let $k_1 = a_0 + a_1 p + \dotsb + a_{s(n-1)-1} p ^{s(n-1)-1} + a_{s(n-1)} p
^{s(n-1)}$ be the base $p$ expansion of $0 \le k_1 \le 2q^{n-1}-1$, where
$a_{s(n-1)} \in \{0,1\}$. Therefore, the base $p$ expansion of $k_1q$ is
$ a_0 p^s+ a_1 p^{s+1} + \dotsb + a_{s(n-1)-1} p ^{sn-1} + a_{s(n-1)} p
^{sn}$. Finally $2q^n-2 = (p-2) +   (p-1)p + \dotsb + (p-1)p^{sn-1} + p^{sn}$
 is the base $p$ expansion of $2q^n-2$.
Let $b_i$ denote the digits of $2q^n-2-k_1q$. Since the first $s$ digits  of
$2q^n-2$ are $p-2,p-1, \dotsc, p-1$, the first  $s$ digits of $k_1 q$ are zero,
and $k_1 q +(2q^n-2-k_1 q) = 2q^n-2$, it follows that the first $s$ digits of
$2q^n-2 -k_1 q$ are $b_0 = p-2$ and
$b_i = p-1$, $1 \le i \le s-1$. Next we determine the remaining $b_i$'s
assuming that there is no carry over in the sum of $2q^n-2-k_1q$ and $k_1$. By
this
assumption,  it follows immediately that $a_i=0$ for $1\le i \le s-1$. Thus,
$b_{s+1} = \dotsb = b_{2s-1} = p-1$. Using again that there is no carry over in
the sum of $k_1$ and $2q^n-2-k_1q$, we get $a_{s2-1} = \dotsb = a_{3s-1}=0$ and,
therefore, $b_{2s+1} = \dotsb = b_{3s-1}=p-1$.  By continuing  this way we
obtain
that $k_1 = a_0 + a_sp^s + a_{2s}p^{2s}+ \dotsb +
a_{s(n-1)}p^{s(n-1)}$. Now, since $a_k +b_k \le p-1$, $a_k + b_{k+s}=p-1$ and
$b_0=p-2$, we conclude  that $a_{si}=0$ or $a_{si}=1$. It follows that if
$a_{si}=0$ for some $i$, then $a_{sj}=0$ for $j>i$.

Now, if $k_1 = 0$, clearly $\binom{2q^n-2-k_1(q-1)}{k_1}(-1)^{k_1}=0$. Let us
assume that $k_1 = 1 + q + \dotsb + q^i$ for some $0 \le i \le n-1$. Then,
${q^{i+1}-1} = (1 + q + \dotsb + q^{i})(q-1)$. Therefore, $2q^n-2-k_1(q-1) =
q^n-1 + q^{i+1}(q^{n-i-1}-1)$. The digits $c_j$ of $2q^n-2-k_1(q-1)$ are
$c_{s(i+1)} = p-2$, $c_{sn} =1$, and
$c_j=p-1$, otherwise. The digits of $k_1$ are $a_j = 1$ for $j = 0,s, \dotsc,
si$ and $0$ otherwise.
By Lucas Theorem, in $\F_p$ we have
\begin{align*}
\binom{2q^n-2-k_1(q-1)}{k_1} = \prod _{j=0}^{sn} \binom{c_j}{a_j} 
=  (p-1)^{i+1} = (-1)^{i+1}.
\end{align*}
Then $\binom{2q^n-2-k_1q}{k_1}(-1)^{k_1}=(-1)^{i+1+k_1}=1$ because  $i+1$ and
$k_1$ have the same parity.
Since $(-1)^{2q^n-1}=-1$ for any $q$,   $\left \lfloor \frac{2q^n-2}{q}
\right \rfloor = 2q^{n-1}-1$, and $\sum_{i=0}^{n}[1]^{q^i} = [n+1]$, by
\eqref{s_1(k+1)} we have
\begin{align*}
S_1(2q^n-1)& = -\frac{1}{[1]^{2q^n-1}}
\left(1 +
\sum _{k_1=1}^{ 2q^{n-1}-1 } \binom{ 2q^n-2-k_1(q-1) }{k_1} (-1)^{k_1}
[1]^{k_1(q-1)}
\right) \\
& =  -\frac{1}{[1]^{2q^n-1}}
\left(
1 + [1]^{q-1} + [1]^{q^2-1} + \dotsb + [1]^{q^n-1}
\right) \\
 & = -\frac{[n+1]}{[1]^{2q^n}}.
\end{align*}
\end{proof}

\begin{remark}
  The Proposition~\ref{prop1} proves the Conjecture 2.10 in \cite{Jalr10} for
the case $m=2$ and $d=1$. 
\end{remark}

Next we prove the Conjecture 2.8.1 formulated in \cite{Jalr10}.
For general $q$ we have the following theorem.
\begin{theorem}\label{large_indices}
Let $q$ be arbitrary. Then
\begin{align*}
  \zeta(q^n) \zeta(q^n-1) = \zeta(2 q^n-1) + \zeta(q^n-1,q^n).
\end{align*}
\end{theorem}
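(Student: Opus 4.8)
The plan is to show that the shuffle relation of Theorem~\ref{shuffle_thm2}, applied to $(a,b)=(q^n,q^n-1)$, collapses to a single term. Concretely, I claim that the set $S(q^n,q^n-1)$ consists of the one pair $(f_1,a_1)=(-1,q^n)$. Granting this, Theorem~\ref{shuffle_thm2} yields $\zeta(q^n)\zeta(q^n-1)-\zeta(2q^n-1)-\zeta(q^n,q^n-1)-\zeta(q^n-1,q^n)=-\zeta(q^n,q^n-1)$, and cancelling $\zeta(q^n,q^n-1)$ from both sides gives exactly the asserted identity. So everything reduces to verifying \eqref{Delta1} at $d=1$ with this single pair. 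Since at $d=1$ the inner degree is forced to be $0$ (the only monic polynomial of degree $0$ is $1$), we have $S_1(a_1,a+b-a_1)=S_1(q^n)$, and the task becomes the power-sum identity $\Delta_1(q^n,q^n-1)=S_1(q^n)S_1(q^n-1)-S_1(2q^n-1)=-S_1(q^n)$.

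To check this I would compute the three relevant $d=1$ power sums in closed form. Writing $a\in A_{1^+}$ as $t+c$ with $c\in\F_q$ and using $(t+c)^{q^n}=t^{q^n}+c$ together with the elementary evaluation $\sum_{c\in\F_q}1/(T+c)=-1/(T^q-T)$ (with $T=t^{q^n}$, so that $T^q-T=[1]^{q^n}$), one gets $S_1(q^n)=-1/[1]^{q^n}$ at once. For $S_1(q^n-1)=\sum_c (t+c)/(t^{q^n}+c)$ the same substitution, after writing $t+c=(t-T)+(T+c)$ and using $\sum_c 1=q=0$, telescopes to $S_1(q^n-1)=(t-t^{q^n})\cdot(-1/[1]^{q^n})=[n]/[1]^{q^n}$. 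Finally $S_1(2q^n-1)=-[n+1]/[1]^{2q^n}$ is exactly Proposition~\ref{prop1}; alternatively it follows from the same method using $\sum_c 1/(T+c)^2=1/(T^q-T)^2$.

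With these in hand the verification is immediate: $S_1(q^n)S_1(q^n-1)-S_1(2q^n-1)=(-[n]+[n+1])/[1]^{2q^n}=[1]^{q^n}/[1]^{2q^n}=1/[1]^{q^n}=-S_1(q^n)$, where I use the telescoping $[n+1]-[n]=t^{q^{n+1}}-t^{q^n}=[1]^{q^n}$. This confirms $S(q^n,q^n-1)=\{(-1,q^n)\}$, and since $-1\in\F_p^\times$ and a single index is trivially distinct, the hypotheses of Theorem~\ref{shuffle_thm2} are met and the proof concludes as above. The only genuine content is the two closed forms for $S_1(q^n)$ and $S_1(q^n-1)$; I expect the main obstacle to be resisting the urge to grind these out via Lucas' Theorem (as in Proposition~\ref{prop1}) and instead to exploit the substitution $(t+c)^{q^n}=t^{q^n}+c$, which eliminates the leading-term bookkeeping and exposes the telescoping $[n+1]-[n]=[1]^{q^n}$ that drives the cancellation.
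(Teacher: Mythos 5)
Your proposal is correct and follows essentially the same route as the paper's first proof: reduce via Theorem~\ref{shuffle_thm2} to the single $d=1$ identity $S_1(q^n)S_1(q^n-1)=S_1(2q^n-1)-S_1(q^n)$ and verify it from closed forms for the three power sums together with the telescoping $[n+1]-[n]=[1]^{q^n}$. The only (harmless) difference is that you rederive $S_1(q^n)=-1/[1]^{q^n}$ and $S_1(q^n-1)=[n]/[1]^{q^n}$ by the substitution $(t+c)^{q^n}=t^{q^n}+c$ and the identity $\sum_{c\in\F_q}1/(T+c)=-1/(T^q-T)$, whereas the paper simply quotes \eqref{special_case1}--\eqref{special_case2}; your remark that the same device also yields Proposition~\ref{prop1} without the Lucas-theorem computation is a genuine simplification of that proposition's proof.
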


\begin{proof}[Proof 1]
By Theorem~\ref{shuffle_thm2} it is enough to prove
\begin{align}
   S_1(q^n)S_1(q^n-1) = S_1(2q^n-1) - S_1(q^n). \label{Conjecture281}
\end{align}
From \cite[3.3]{Thakur_Multizeta08}, we know that
\begin{align}
 S_d(ap^n) & =  1/{\ell _d ^{ap^n} },\quad \mbox {if }a\le
q,\label{special_case1} \\
S_d(q^i-1) & =\frac{\ell _{d+i-1} }{\ell_{i-1}\ell_d ^{q^i}
}=\frac{[d+i-1]\dotsm
[d+1] }{[i-1]\dotsm [1]\ell _d ^{q^i-1} }.\label{special_case2}
\end{align}
Equation \eqref{Conjecture281} follows from formulas \eqref{special_case3},
\eqref{special_case1}, \eqref{special_case2},  and from
 the fact $[n+1]-[n] = [1]^{q^n}$.
\end{proof}

\begin{proof}[Proof 2.]
By definition of $\Delta(a,b)$, it follows that
\begin{align}
  \Delta(a,b) 
= \sum _{ \substack{n_1\ne n_2 \\ n_1,n_2\in A_{1^+}} } \frac{1}{n_1^a n_2^b}
= \sum _{n_2\in A_{1^+}} \frac{1}{n_2^b}
\left(
S_1(a) - \frac{1}{n_2^a}
\right).\label{Delta_ab_in_terms_of_S1a}
\end{align}
For any $a\in A_{1^+}$, we have $[1]/a = a^{q-1}-1$. By
\eqref{special_case1}, we also have $[1]^{q^n} S_1(q^n) =-1$. Taking $b = q^n-1$
in \eqref{Delta_ab_in_terms_of_S1a}, we have
\begin{align*}
  \Delta(q^n,q^n-1)  &=S_1(q^n) \sum _{a \in A_{1^+}} \frac{1}{a^{q^n-1}}
\left(
1 + \frac{[1]^{q^n}}{a^{q^n}}
\right) \\
& = S_1(q^n) \sum _{a \in A_{1^+}} \frac{1}{a^{q^n-1}} a^{(q-1)q^n}\\
& = S_1(q^n) S_1(-N),
\end{align*}
where $N = {q^{n+1}-2q^n+1}$. To finish, we prove that $S_1(-N)=-1$. Now
\begin{align*}
  S_1(-N)  
 = \sum _{\theta \in \F_q} (t+\theta)^N 
 = t^N +  \sum _{\theta \in \F_q^*} (t+\theta)^N
 = t^N +  \sum _{l=0}^N \binom{N}{l} t^{N-l}  \sum _{\theta \in \F_q^*}
\theta^l.
\end{align*}

Since the sum $\sum_{\theta\in \F_q^*} \theta^l$ is 0 if $q-1$ does not
divide $l$, and $-1$ if $l\ge 0$ is divisible by $q-1$, and $N \equiv 0
\bmod (q-1)$, we get
\begin{align*}
  S_1(-N) = -1- \sum _{0<l<\frac{N}{q-1}} \binom{N}{l(q-1)} t^{N-l(q-1)}.
\end{align*}
Let $m=sn$. The base $p$ expansion of $N$ is $N = 1 + (p-2)p^m + (p-1)p^{m+1}
+\dotsb + (p-1)p^{m+s-1}$.
Let $l(q-1) = \sum _{k=0}^{m+s-1} b_k p^k$ be the base $p$ expansion of
$l(q-1)$. By Lucas Theorem, the following equality holds in $\F_p$
\begin{align*}
\binom{N}{l(q-1)} = 
\binom{1}{b_0}
\binom{0}{b_1}
\dotsm
\binom{0}{b_{m-1}}
\binom{p-2}{b_m}
\binom{p-1}{b_{m+1}}
\dotsm
\binom{p-1}{b_{m+s-1}}.
\end{align*}
Therefore, $\binom{N}{l(q-1)} \ne 0$ if and only if  $b_0\le 1$, $b_k=0$ for
$k=1,\dotsc,m-1$, and $b_m \le p-2$.

Since $p^m$ and $q-1$ are coprime, $p^m$ is a unit in $\Z/(q-1)\Z$. Therefore,
the equation $j \equiv -\iota p^m\bmod (q-1)$ has always a solution.
Furthermore, there is exactly one solution in the range $0 \le \iota <q-1$. For
$j=0$, the solution is $0$, and for $j=1$ the solution is $q-2$ because $N = 1
+(q-2)p^m \equiv 0\bmod(q-1)$.

If $\binom{N}{l(q-1)}\ne 0$, then $l(q-1) = j + i p^m$, $0\le j \le 1$, $0 \le i
\le q-2$, where $i = \sum _{k=0}^{m+s-1} b_{m+k}p^{k}$. Then, $l(q-1) \in
\set{0,N}$. Thus, $\binom{N}{l(q-1)}=0$ for $1 \le l < N/(q-1)$. Therefore,
$S_1(-N) =  -1$.
\end{proof}

\section{Relations for small values of \texorpdfstring{$a$}{a}}

In this section we  prove that for $1\le a \le p$, the sets $S(a,b)$ can
be found recursively; we also prove some of the conjectures for low values of
$a$ given in \cite{Thakur_Multizeta08, Jalr10}.

The following results will be used in this section.

\begin{theorem}\label{thm251}
Let $k$ be a positive
integer. Let $\ell(k)$ be the sum of digits of $k$ in base $q$.
\begin{itemize}
  \item [a)] If $d>\ell(k)/(q-1)$, then $S_d(-k)=0$ (\cite[Theorem
5.1.2]{Thakur}; see also \cite[Lemma~7.1]{Lee1943} and
\cite[Corollary~2.12]{Gekeler1988}).
\item [b)] $S_d(-(q^{k+d}-1)) =  (-1)^d D_{d+k}/L_d D_k^{q^d}$, where 
$D_n = [n][n-1]^q \dotsm [1]^{q^{n-1}}$ and $L_n = [n][n-1]\dotsm [1]$ 
for $n>0$,  $D_0 = 1$, and $L_0=1$ (\cite{Carlitz4};
see also \cite[Theorem~4.1]{Gekeler1988}).
\end{itemize}
\end{theorem}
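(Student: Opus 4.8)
The plan is to treat both statements as evaluations of the power sums $S_d(-k)=\sum_{a\in A_{d^+}}a^{k}$, and to run everything off two elementary tools. The first is the character-sum identity: $\sum_{\theta\in\F_q}\theta^{e}=-1$ when $e>0$ and $(q-1)\mid e$, and $\sum_{\theta\in\F_q}\theta^{e}=0$ otherwise (including $e=0$, where the sum is $q\equiv 0$). The second is Lucas' Theorem in the form recalled above: a (multi)nomial coefficient is nonzero in $\F_p$ exactly when the summands add with no carry in base $p$, and in that case the base-$q$ digit sum is additive, $\ell(n)=\sum_i\ell(e_i)$ whenever $n=\sum_i e_i$ with no base-$p$ carries.

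For part (a) I would argue directly. Writing a monic $a$ of degree $d$ as $a=t^{d}+\sum_{i=0}^{d-1}c_i t^{i}$ with $c_i\in\F_q$ and expanding $a^{k}$ by the multinomial theorem, $S_d(-k)=\sum_{a}a^{k}$ becomes a sum of terms $\binom{k}{e_d,\dots,e_0}t^{d e_d}\prod_{i<d}t^{i e_i}\prod_{i<d}\bigl(\sum_{c_i\in\F_q}c_i^{e_i}\bigr)$ over compositions $e_d+\dots+e_0=k$. By the character-sum identity a term survives only if each of the $d$ exponents $e_0,\dots,e_{d-1}$ is a positive multiple of $q-1$, so $\ell(e_i)\ge q-1$ for $0\le i\le d-1$. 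A surviving term also needs its multinomial coefficient nonzero in $\F_p$, hence no carries in base $p$, hence $\ell(k)=\sum_{i=0}^{d}\ell(e_i)\ge\sum_{i=0}^{d-1}\ell(e_i)\ge d(q-1)$, i.e.\ $d\le\ell(k)/(q-1)$. Contrapositively, if $d>\ell(k)/(q-1)$ then no term survives and $S_d(-k)=0$.

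Part (b) is the computational heart, and for it I would set up a recursion in $d$. The assignment $a=t\,a_2+\theta\mapsto(a_2,\theta)$ is a bijection $A_{d^+}\cong A_{(d-1)^+}\times\F_q$, and expanding $(t a_2+\theta)^{n}$ and summing over $\theta$ with the character-sum identity yields
\begin{align*}
S_d(-n)=-\sum_{\substack{l>0\\ (q-1)\mid l}}\binom{n}{l}\,t^{\,n-l}\,S_{d-1}\bigl(-(n-l)\bigr),\qquad S_0(-n)=1.
\end{align*}
I would then specialize $n=q^{k+d}-1$, whose base-$p$ digits are all $p-1$, and induct on $d$. By Lucas' Theorem every $\binom{n}{l}$ is then nonzero in $\F_p$, equal to $(-1)^{\ell_p(l)}$ (here $\ell_p$ is the base-$p$ digit sum), so the only restriction comes from part (a): $S_{d-1}(-(n-l))$ vanishes unless $\ell(n-l)\ge(d-1)(q-1)$. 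Since the subtraction $n-l$ never borrows, $\ell(n-l)=\ell(n)-\ell(l)=(k+d)(q-1)-\ell(l)$, restricting the admissible $l$ to those with $(q-1)\mid l$ and $\ell(l)\le(k+1)(q-1)$. Feeding the inductive closed form for $S_{d-1}$ into the recursion and collapsing the resulting Frobenius powers of the $[i]$ via $D_d=[d]D_{d-1}^{\,q}$ and $L_d=[d]L_{d-1}$ should telescope the sum into $(-1)^{d}D_{d+k}/(L_d D_k^{q^{d}})$.

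The main obstacle is exactly this last telescoping: organizing the sum over the admissible $l$ and verifying that the surviving Frobenius-twisted factors $[i]^{q^{j}}$ — arriving through $t^{n-l}$ and through the lower power sum — assemble precisely into the quotient $D_{d+k}/(L_d D_k^{q^{d}})$ rather than into some nearby product. That the off-diagonal terms genuinely matter can be seen from a quick sanity check: the naive leading contribution one reads off from the generating function $\sum_n S_d(-n)\,x^{-n-1}=c_0/(E_d(x)-D_d)$, where $E_d(x)=\prod_{\deg b<d}(x-b)$ and $c_0=(-1)^dD_d/L_d$, is $c_0 D_d^{\,q^{k}-1}=(-1)^dD_d^{\,q^{k}}/L_d$, which already fails to match the target and so confirms that the remaining terms cannot be discarded. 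Part (a), by contrast, needs only the coarse digit-sum inequality and is routine once the character-sum and Lucas package is in place.
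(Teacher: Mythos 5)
First, note that the paper does not prove Theorem~\ref{thm251} at all: it is quoted with attributions to Carlitz, Lee, Gekeler and Thakur, so there is no internal proof to compare yours against. Judged on its own, your argument for part (a) is complete and correct, and it is essentially the classical Lee--Gekeler proof: expand $a^k$ multinomially over the coefficients of $a$, use that $\sum_{\theta\in\F_q}\theta^e$ survives only for $e$ a positive multiple of $q-1$ (forcing $\ell(e_i)\ge q-1$ for each of the $d$ lower coefficients), and use the carry-free criterion to get additivity of digit sums, whence $\ell(k)\ge d(q-1)$ for any surviving term. The one point worth writing out is the passage from ``no carries in base $p$'' to additivity of the base-$q$ digit sum $\ell$; it is true (carry-freeness in base $p$ implies carry-freeness in base $q=p^s$), but it deserves a line.

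Part (b), however, is not a proof but a plan, and as structured the induction does not close. Your recursion $S_d(-n)=-\sum_{l>0,\,(q-1)\mid l}\binom{n}{l}t^{n-l}S_{d-1}(-(n-l))$ is correct, as is the reduction of the admissible $l$ to those with $\ell(l)\le (k+1)(q-1)$. But the inductive hypothesis only gives you the value of $S_{d-1}$ at arguments of the special form $-(q^{(d-1)+k'}-1)$, whereas the surviving terms of the recursion require $S_{d-1}(-(n-l))$ for \emph{every} $l$ with $(q-1)\mid l$ and $(d-1)(q-1)\le \ell(n-l)$ --- a large set of arguments, almost none of which are of that form. Part (a) only eliminates the terms with $\ell(n-l)<(d-1)(q-1)$; it says nothing about the rest. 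So ``feeding the inductive closed form into the recursion'' is not available, and the telescoping you describe as ``the main obstacle'' is in fact the entire content of the statement. To make this route work you would need a substantially stronger inductive statement (e.g.\ an explicit evaluation of $S_{d-1}(-m)$ for all $m$ in the relevant digit-sum range), or you should abandon the recursion in favor of the generating-function identity $\sum_n S_d(-n)x^{-n-1}=\bigl((-1)^dD_d/L_d\bigr)/\bigl(e_d(x)-D_d\bigr)$ with $e_d(x)=\sum_{i=0}^d(-1)^{d-i}\bigl(D_d/(D_iL_{d-i}^{q^i})\bigr)x^{q^i}$, expand the geometric series in $D_d/e_d(x)$, and extract the coefficient of $x^{-q^{k+d}}$ --- which is essentially Carlitz's and Gekeler's argument. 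Your own sanity check (the leading term gives $(-1)^dD_d^{q^k}/L_d$, not the target) already shows the combinatorics of the cross terms is where the theorem lives, and that part is missing.
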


In particular, when $d = 1$, we have $S_1(-(q-1))=-1$.

\begin{definition}\label{main_definitions} Let $a\in \Z_+$.
\begin{enumerate}
\item  We set
    \begin{align*}
    r_a := (q-1)p^m,
    \end{align*}
    where $m$ is the smallest integer such that $a\le p^m$.
\item For $i,j$, put
    \begin{align*}
    \phi(i,j)&:=r_a-a-j(q-1)+ir_a,\\
    \phi(j)& :=\phi(0,j).
    \end{align*}
\item We define
    \begin{align*}
    j_{a, \max}=\left \lfloor \frac{r_a-a}{q-1} \right \rfloor,
    \end{align*}
where $\lfloor x \rfloor$ is the largest integer not greater than $x$.

\item Let  $\Int{x}$ be 1 if $x$ is integer, and $0$, otherwise.

\end{enumerate}
\end{definition}

\begin{theorem}[Conjecture 4.3.1, \cite{Thakur_Multizeta08}]\label{Delta1b}
For general $q$, we have,
\begin{align*}
\zeta(1)\zeta(b)=\zeta(1+b)+ \zeta(1,b) + \zeta(b,1) +\sum _{i=0}^{\sigma-1}
\zeta(b-\phi(i),1+\phi(i)),
\end{align*}
where $\phi(i)$ and $r_1$ are as in Definition~\ref{main_definitions}, and
$b=r_1\sigma+\beta,0<\beta\le r_1$.
\end{theorem}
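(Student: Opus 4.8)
The plan is to reduce the identity for $\zeta(1)\zeta(b)$ to the single-degree statement about $S_1$, and then prove that single-degree identity by the same explicit power-series-in-$U$ computation that underlies the parity theorem. By Theorem~\ref{shuffle_thm2}, it suffices to verify the $d=1$ relation
\begin{align*}
S_1(1)S_1(b) - S_1(1+b) = \sum_{i=0}^{\sigma-1} S_1\bigl(b-\phi(i),1+\phi(i)\bigr),
\end{align*}
and since $S_1(b-\phi(i),1+\phi(i))$ is supported on a single degree, the right-hand side is in fact $\sum_i S_1(b-\phi(i))\cdot(\text{lower-degree contribution})$; more precisely, using $\Delta(1,b)=S_1(1)S_1(b)-S_1(1+b)$, the content is to show that $\Delta(1,b)$ equals $\sum_{i=0}^{\sigma-1} S_1(b-\phi(i))S_1(1+\phi(i))$ once one feeds in the correct shuffle coefficients $f_i$ produced by the parity theorem. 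So the first step is to compute $\Delta(1,b)$ explicitly as an $\F_p$-linear combination of powers of $U=1/[1]$, exactly as in the proof of Theorem~\ref{parity_conjecture}.

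The second step is the heart of the matter: for the special index $a=1$ the formula \eqref{s_1(k+1)} collapses dramatically. Since $S_1(1)=\sum_{\theta\in\F_q}(t+\theta)^{-1}$ and the only binomial term surviving is $k_1=0$, we get simply $S_1(1)=-U=-1/[1]$. Thus $\Delta(1,b)=S_1(1)S_1(b)-S_1(1+b)=-U\,S_1(b)-S_1(1+b)$, and I would expand both $S_1(b)$ and $S_1(1+b)$ via the $\varphi$-and-$\alpha$ notation of the parity proof. The key combinatorial computation is to track, term by term, which powers $U^{\varphi_{1+b}(k)}$ appear, and to match the resulting coefficients against the `even' values $1+\phi(i)$ dictated by $r_1=(q-1)p^{m}$ and the range $b=r_1\sigma+\beta$. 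Here the definition $\phi(i)=r_1-1-j(q-1)+ir_1$ (with the relevant index choice) is designed precisely so that each surviving power of $U$ in $\Delta(1,b)$ corresponds to exactly one value $1+\phi(i)$, $0\le i\le\sigma-1$, with coefficient $f_i=\pm1$. I would verify this correspondence using Lucas' Theorem to control the binomials $\binom{\varphi_{b-1}(i)}{i}$ modulo $p$, showing nonvanishing occurs exactly at the predicted indices.

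The third step is bookkeeping: once the $d=1$ identity
$$\Delta(1,b)=\sum_{i=0}^{\sigma-1} f_i\,S_1\bigl(1+\phi(i)\bigr)$$
is established with the exhibited $f_i\in\F_p^\times$ and distinct indices $1+\phi(i)$, I invoke Theorem~\ref{shuffle_thm2} to lift it to all $d\ge 0$, and then read off the multizeta identity
$$\zeta(1)\zeta(b)-\zeta(1+b)-\zeta(1,b)-\zeta(b,1)=\sum_i f_i\,\zeta\bigl(b-\phi(i),1+\phi(i)\bigr),$$
where the first argument $b-\phi(i)=a+b-a_i$ is forced to be `even' by the parity theorem. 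The substitution $r_1=(q-1)p^m$ and the splitting $b=r_1\sigma+\beta$, $0<\beta\le r_1$, then reorganize the sum into precisely the stated form.

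The main obstacle I anticipate is the second step, namely the explicit bookkeeping of which powers of $U$ survive in $\Delta(1,b)$ and proving that the surviving coefficients are all $+1$ (so that $f_i=1$ for each $i$). This requires a careful case analysis on the base-$p$ digits of $b$ relative to the block size $p^m$, and establishing that the ``carry'' structure in the products $\alpha_{1,0}\alpha_{b,j}$ versus $\alpha_{1+b,l}$ produces cancellations everywhere except at the $\sigma$ predicted indices $\phi(0),\dots,\phi(\sigma-1)$. The periodicity governed by $r_1$ is what makes exactly $\sigma$ terms appear, and pinning this down rigorously via Lucas' Theorem — rather than by the inductive stripping-off argument used in the parity proof — is where the real work lies.
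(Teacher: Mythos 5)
There is a genuine gap: the heart of your argument is never carried out. You reduce correctly to the $d=1$ identity and compute $S_1(1)=-1/[1]$, but the entire content of the theorem is then deferred to ``I would verify this correspondence using Lucas' Theorem,'' and you yourself flag that ``pinning this down rigorously \dots is where the real work lies.'' That work is not optional here, and the route you sketch is both harder than necessary and misdescribed. Writing $\Delta(1,b)=\sum_k c_k U^{1+b-k(q-1)}$ with $c_k=(-1)^{b+k}\binom{b-1-k(q-1)}{k-1}$, what must be proved is that this whole polynomial equals $\sum_{i=0}^{\sigma-1}S_1(b-\phi(i))$, where each summand $S_1(b-\phi(i))$ is itself a polynomial in $U$ with many nonzero coefficients; so the claim that ``each surviving power of $U$ in $\Delta(1,b)$ corresponds to exactly one value $1+\phi(i)$ with coefficient $\pm1$'' is not the right target -- you would need to match \emph{every} coefficient $c_k$ against a sum $\sum_{i+1+j=k}\alpha_{m_i,j}$ of binomial coefficients mod $p$, a genuinely nontrivial family of congruences that the proposal neither states nor proves. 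There are also two concrete slips: the displayed identity $\Delta(1,b)=\sum_i f_i\,S_1(1+\phi(i))$ has the wrong argument (at $d=1$ one has $S_1(a_i,a+b-a_i)=S_1(a_i)S_0(\cdot)=S_1(a_i)$ with $a_i=b-\phi(i)$, so the correct claim is $\Delta(1,b)=\sum_i S_1(b-\phi(i))$), and it is the \emph{second} index $1+\phi(i)=(i+1)(q-1)$ that is `even' by the parity theorem, not the first index $b-\phi(i)=a+b-a_i$ as you assert -- indeed $b-\phi(i)$ is `even' only when $b+1$ is.

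For comparison, the paper's proof sidesteps all of this bookkeeping with a recursion in $b$ of step $r_1=q-1$: since $[1]S_1(1)=-1$ and $[1]/n=n^{q-1}-1$ for $n\in A_{1^+}$, one gets $S_1(1)-n^{-1}=S_1(1)\,n^{q-1}$, hence
\begin{align*}
\Delta(1,b+r_1)=\sum_{n\in A_{1^+}}\frac{1}{n^{b+r_1}}\left(S_1(1)-\frac{1}{n}\right)=S_1(1)S_1(b),
\end{align*}
so that $\Delta(1,b+r_1)-\Delta(1,b)=S_1(1+b)$. Combined with the base case $\Delta(1,b)=0$ for $1\le b\le q-1$ (because $1+b\le q$), this yields $\Delta(1,b)=\sum_{i=0}^{\sigma-1}S_1(b-\phi(i))$ by induction, and Theorem~\ref{shuffle_thm2} finishes. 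If you want to salvage your direct approach, you must actually prove the coefficient-matching congruences; otherwise the recursive argument is the efficient path.
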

\begin{proof}
Let $a=1$. Then, $r_1 = q-1$. By \eqref{special_case2}, we have $[1] S_1(1) =
-1$. Proceeding as in the second proof of Theorem~\ref{large_indices}, we get
$\Delta(1,b+r_1)  = S_1(1) S_1(b)$. Therefore, $\Delta(1,b+r_1) - \Delta(1,b) =
S_1(1+b)$. Now, $\Delta(1,b)=0$ for $1\le b\le q-1$ because $1+b\le q$
\cite[Theorem 1]{Thakur_Multizeta08}. Therefore, for any $b\in \Z_+$,
$\Delta(1,b) = \sum _{i=0}^{\sigma-1} S_1(b-\phi(i))$. Theorem follows
from applying
Theorem~\ref{shuffle_thm2}.
\end{proof}

\begin{remark}
\begin{enumerate}
  \item Theorem~\ref{Delta1b} is proved for $q=2$ and $b$ arbitrary, and also
for general $q$ and $b$ `even' in \cite{Thakur_Multizeta08}.
\item The proof of Theorem~\ref{Delta1b} shows that the sets $S(1,b)$ can be
found recursively, with recursion length $r_1 = q-1$.
\end{enumerate}
\end{remark}

\begin{theorem}\label{a_between_2_and_p}
Let $a,b\in \Z_+$. Let $r_a$ be as in
Definition~\ref{main_definitions}. If $2 \le a \le p$, then 
\begin{align*}
  \Delta(a,b+r_a)- \Delta(a,b)  = \sum _{j=1}^{p-a} f_{a,j}
S_1(a+b+(p-j)(q-1)) +S_1(a+b),
\end{align*}
where $f_{a,j} = \binom{a+j-1}{j}= \binom{a+j-1}{a-1}$ is nonzero for any $j$.
In particular, the sets $S(a,b)$, $2\le a \le p$, can be found recursively with
recursion length $r_a$, by
\begin{align*}
  S(a,b+r_a) = S(a,b)\cup T(a,b+r_a),
\end{align*}
where
\begin{align*}
  T(a,b+r_a) = \set{(1,a+b)} \cup \set{ \left( f_{a,j}, a+b + (p-j)(q-1)
\right) \mid 1
\le j \le p-a}.
\end{align*}
\end{theorem}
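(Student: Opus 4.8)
The plan is to mimic the second proof of Theorem~\ref{large_indices}, the point being that for $2\le a\le p$ the value $S_1(a)$ is as simple as $S_1(q^n)$ was there. Since $a-1<q$, the bound $n_a=\lfloor(a-1)/q\rfloor$ in the expansion of $S_1(a)$ from the proof of Theorem~\ref{parity_conjecture} is $0$, so $S_1(a)=(-1)^a/[1]^a$; and $2\le a\le p$ forces the integer $m$ of Definition~\ref{main_definitions} to equal $1$, whence $r_a=p(q-1)$. Starting from \eqref{Delta_ab_in_terms_of_S1a} I would write
\[
\Delta(a,b+r_a)-\Delta(a,b)=\sum_{n\in A_{1^+}}\left(\frac1{n^{b+r_a}}-\frac1{n^b}\right)\left(S_1(a)-\frac1{n^a}\right),
\]
and use that for $n\in A_{1^+}$ one has $n^{q-1}-1=[1]/n$. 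The Frobenius identity $(x-1)^p=x^p-1$ then gives $n^{-r_a}-1=(n^{-(q-1)}-1)^p=(-[1]/n^q)^p=-[1]^p/n^{pq}$, so that
\[
\Delta(a,b+r_a)-\Delta(a,b)=-[1]^p\bigl(S_1(a)S_1(b+pq)-S_1(a+b+pq)\bigr).
\]

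Next I would eliminate the powers of $[1]$. The single-step form of the computation above is $[1]S_1(m)=S_1(m-q)-S_1(m-1)$, valid for all $m$; equivalently, multiplication by $[1]$ acts on the family $\{S_1(m)\}_m$ as $E^{-q}-E^{-1}$, where $E$ is the unit shift $E^jS_1(m)=S_1(m+j)$. Iterating,
\[
[1]^eS_1(m)=\sum_{i=0}^e\binom{e}{i}(-1)^{e-i}S_1\bigl(m-e-i(q-1)\bigr).
\]
Applying this with $e=p$ to the second term (only $i=0,p$ survive modulo $p$) yields $[1]^pS_1(a+b+pq)=S_1(a+b)-S_1(a+b+p(q-1))$. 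For the first term I would write $-[1]^pS_1(a)=(-1)^{a+1}[1]^{p-a}$ and apply the identity with $e=p-a$, $m=b+pq$; since $b+pq-(p-a)=a+b+p(q-1)$ and $(-1)^{p+1}=1$ in $\F_p$, this produces $\sum_{i=0}^{p-a}(-1)^i\binom{p-a}{i}S_1(a+b+(p-i)(q-1))$. Its $i=0$ term cancels the stray $S_1(a+b+p(q-1))$, leaving $S_1(a+b)+\sum_{j=1}^{p-a}(-1)^j\binom{p-a}{j}S_1(a+b+(p-j)(q-1))$. Finally the upper-negation identity, reduced mod $p$ (legitimate since $0\le j\le p-a<p$), gives $(-1)^j\binom{p-a}{j}\equiv\binom{a+j-1}{j}=f_{a,j}$, and Lucas' theorem shows $f_{a,j}\neq0$ because $a+j-1<p$; this is exactly the asserted formula.

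For the recursive description of the sets $S(a,b)$, I would use—as in the proof of Theorem~\ref{parity_conjecture}—that $\Delta(a,b)=\sum_{(f_i,a_i)\in S(a,b)}f_iS_1(a_i)$ is a polynomial in $U=1/[1]$ of $U$-degree $<a+b$, and that its representation is recovered greedily from the top $U$-degree downward, since each $S_1(a_i)$ has leading term $(-1)^{a_i}U^{a_i}$. All indices $a_i$ occurring in $S(a,b)$ are therefore $<a+b$, while the new indices $a+b$ and $a+b+(p-j)(q-1)$, $1\le j\le p-a$, are all $\ge a+b$ and pairwise distinct. Hence the greedy algorithm applied to $\Delta(a,b+r_a)$ first peels off the new terms, reproducing $T(a,b+r_a)$, and then returns $\Delta(a,b)$, reproducing $S(a,b)$; so $S(a,b+r_a)=S(a,b)\cup T(a,b+r_a)$ is a disjoint union.

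The main obstacle is the middle step: extracting the clean factor $-[1]^p$ from the $r_a$-shift via Frobenius, and then expanding $[1]^{p-a}S_1(b+pq)$ through the shift operator so that all residual powers of $[1]$ disappear and the surviving coefficients reorganize into $\binom{p-a}{j}$. The two delicate points are the sign collection (using $(-1)^{p+1}=1$ in $\F_p$, valid also for $p=2$) and the upper-negation identity turning $(-1)^j\binom{p-a}{j}$ into $f_{a,j}=\binom{a+j-1}{j}$; everything else is formal manipulation of the quantities $S_1(m)$ under multiplication by $[1]$.
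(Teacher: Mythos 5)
Your proposal is correct, and while it rests on the same two pillars as the paper's proof --- the Frobenius identity applied to $[1]/n=n^{q-1}-1$, and the upper-negation congruence $(-1)^j\binom{p-a}{j}\equiv\binom{a+j-1}{j}\bmod p$ --- it is organized differently enough to be worth contrasting. The paper writes down the correction term $\Sigma=\sum_{j=1}^{p-a}f_{a,j}S_1(a+b+(p-j)(q-1))$ in advance, expands it as a sum over $n\in A_{1^+}$ via $g_n=(-1)^{a+1}(n^{q-1}-1)^{p-a}$, and then verifies $\Delta(a,b+r_a)-\Sigma=S_1(a)S_1(b)=\Delta(a,b)+S_1(a+b)$ using $(1+[1]/n)^p=n^{p(q-1)}$. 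You instead derive the answer forwards: you first isolate the clean intermediate identity $\Delta(a,b+r_a)-\Delta(a,b)=-[1]^p\bigl(S_1(a)S_1(b+pq)-S_1(a+b+pq)\bigr)$ and then eliminate the powers of $[1]$ with the difference-operator lemma $[1]S_1(m)=S_1(m-q)-S_1(m-1)$ and its iterate. That lemma is nowhere stated in the paper, but it is exactly the paper's binomial expansion of $(n^{q-1}-1)^e$ summed over $n$, so the two computations are term-for-term equivalent (including the sign bookkeeping via $(-1)^{p+1}=1$ in $\F_p$); yours has the advantage of not needing the answer guessed in advance, and the shift-operator identity is reusable elsewhere. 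On the second assertion you are in fact more careful than the paper, which only remarks that ``$T(a,b+r_a)$ is exactly as claimed'': you justify the disjointness by noting that every index occurring in $S(a,b)$ is $<a+b$ (the $U$-degree bound from the proof of Theorem~\ref{parity_conjecture}) while the new indices are $\ge a+b$ and pairwise distinct, with uniqueness of the representation coming from the $\F_p$-linear independence of the $S_1(m)$ (distinct leading $U$-degrees). I find no gaps.
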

\begin{proof}
For each $n \in A_{1^+}$, let $g_n=  (-1)^{a+1}{[1]^{p-a}}/{n^{p-a}}$. Then
\begin{align*}
 g_n= (-1)^{a+1}(n^{q-1}-1)^{p-a} 
=1+  \sum _{j=1}^{p-a} f_{a,j} n^{j(q-1)},
\end{align*}
where $f_{a,j} = \binom{p-a}{j}(-1)^{j}$. Let $\Sigma = \sum _{j=1}^{p-a}
f_{a,j} S_1(b+r_a-\phi(p-j))$. Then,
\begin{align*}
 \Sigma   =\sum _{n_2\in A_{1^+}} \sum_{j=1}^{p-a}
\frac{f_{a,j}}{n_2^{b+r_a-(r_a-a -(p-j)(q-1))}} 
 &= \sum _{n_2\in A_{1^+}} \frac{1}{n_2^{b+r_a}} \frac{1}{n_2^{a}} \sum
_{j=1}^{p-a} f_{a,j} n_2^{j(q-1)}.
\end{align*}
By \eqref{special_case2}, we have $[1]^a S_1(a) = (-1)^a$. For any $n \in
A_{1^+}$,
\begin{align*}
S_1(a) - \frac{g_n}{n^a}
= S_1(a) + S_1(a) \frac{[1]^p}{n^p}
= S_1(a) 
\left(
1 + \frac{[1]}{n}
\right)^p
= S_1(a)n^{p(q-1)}.
\end{align*}
Then, 
\begin{align*}
\Delta(a,b+r_a) - \Sigma & = \sum _{n_2\in A_{1^+}} \frac{1}{n_2^{b+r_a}}
\left(
S_1(a) - \frac{1}{n_2^a} \left(1 +\sum _{j=1}^{p-a} f_{a,j}n_2^{j(q-1)}  \right)
\right)\\
& = S_1(a)\sum _{n_2\in A_{1^+}} \frac{n_2^{p(q-1)}}{n_2^{b+r_a}}\\
& = S_1(a)S_1(b).
\end{align*}
Therefore, $\Delta(a,b+r_a) -\Delta(a,b) = \Sigma + S_1(a+b)$. 
This shows that $T(a,b+r_a)$ is exactly as claimed.  Note that
$f_{a,j} \ne 0$ because of  Lucas Theorem. Finally,
\begin{align*}
  f_{a,j} & = \binom{p-a}{j} (-1)^j
= (-1)^j \frac{1}{j!} \prod_{i=0}^{j-1} (p-a-i)
= \frac{1}{j!} \prod_{i=0}^{j-1} (a+i)
= \binom{a+j-1}{j}.
\end{align*}
\end{proof}

Next, we apply Theorem~\ref{a_between_2_and_p} to $a = 2$ and $p=2$.

\begin{theorem}[Conjecture 2.1, \cite{Jalr10}]\label{thm_conjecture2.1}
Let $q$ be a power of $p=2$. Let $r_2$, $\phi(i,j)$ and $\Int\cdot$ be as in
Definition~\ref{main_definitions}. Write $b=r_2\sigma+\beta$, $0<\beta\le r_2$.
Then
\begin{multline*}
\zeta(2)\zeta(b)=\zeta(2+b) + \zeta(2,b) + \zeta(b,2) \\ + \sum
_{i=0}^{\sigma-1} \zeta(b-\phi(i,0),2+\phi(i,0) )+
\Int{\frac{b}{q-1}}\frac{b}{q-1} \zeta(2,b).
\end{multline*}
\end{theorem}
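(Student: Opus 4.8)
The plan is to specialize Theorem~\ref{a_between_2_and_p} to $a=p=2$, iterate the resulting recursion, and then convert the $\Delta$-statement into a multizeta identity via Theorem~\ref{shuffle_thm2}; the only genuinely new input will be an evaluation of the finitely many ``base'' values $\Delta(2,\beta)$. First I would record that for $a=p=2$ we have $r_2=2(q-1)$ and $p-a=0$, so the sum $\sum_{j=1}^{p-a}$ in Theorem~\ref{a_between_2_and_p} is empty and the recursion collapses to
\[
\Delta(2,b+r_2)-\Delta(2,b)=S_1(2+b).
\]
Writing $b=r_2\sigma+\beta$ with $0<\beta\le r_2$ and iterating $\sigma$ times gives
\[
\Delta(2,b)=\Delta(2,\beta)+\sum_{i=0}^{\sigma-1}S_1\bigl(2+\beta+i\,r_2\bigr).
\]

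Next I would match the recursion sum to the claimed sum of multizetas. By Theorem~\ref{shuffle_thm2} each term $S_1(a_i)$ occurring in $\Delta(2,b)$ produces the multizeta value $\zeta(a_i,2+b-a_i)$. For $a_i=2+\beta+i\,r_2$ one computes $2+b-a_i=(\sigma-i)r_2$, so the recursion contributes $\sum_{i=0}^{\sigma-1}\zeta(2+\beta+i\,r_2,(\sigma-i)r_2)$. Using $\phi(i,0)=(i+1)r_2-2$ from Definition~\ref{main_definitions} and substituting $i\mapsto\sigma-1-i$, this is exactly $\sum_{i=0}^{\sigma-1}\zeta(b-\phi(i,0),2+\phi(i,0))$; this step is purely bookkeeping.

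The heart of the argument is the evaluation of the base term $\Delta(2,\beta)$ for $0<\beta\le r_2=2(q-1)$, which must account for the final indicator term. Here I would use the identity $S_1(2)-n^{-2}=S_1(2)\,n^{2(q-1)}$ for $n\in A_{1^+}$ established inside the proof of Theorem~\ref{a_between_2_and_p} (the case $g_n=1$), together with $\Delta(2,\beta)=\sum_{n\in A_{1^+}}n^{-\beta}\bigl(S_1(2)-n^{-2}\bigr)$, to obtain
\[
\Delta(2,\beta)=S_1(2)\sum_{n\in A_{1^+}}n^{2(q-1)-\beta}=S_1(2)\,S_1\bigl(-(2(q-1)-\beta)\bigr).
\]
Setting $M=2(q-1)-\beta\in\{0,\dots,2q-3\}$, I would then read off $S_1(-M)$ from Theorem~\ref{thm251}(a) and the value $S_1(-(q-1))=-1$: computing the base-$q$ digit sum $\ell(M)$ shows $\ell(M)<q-1$ whenever $M\ne q-1$ (both for $M\le q-2$, where $\ell(M)=M$, and for $q\le M\le 2q-3$, where $\ell(M)=1+(M-q)\le q-2$), so $S_1(-M)=0$ there, while $M=q-1$ gives $S_1(-(q-1))=-1$. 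Hence $\Delta(2,\beta)=S_1(2)$ when $\beta=q-1$ (using $-1=1$ in characteristic $2$) and $\Delta(2,\beta)=0$ otherwise.

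Finally I would assemble everything. Since $S_1(2)$ corresponds under Theorem~\ref{shuffle_thm2} to $\zeta(2,b)$, the base term contributes $\zeta(2,b)$ exactly when $\beta=q-1$, i.e. when $(q-1)\mid b$ with $b/(q-1)=1+2\sigma$ odd; reading in $\F_p=\F_2$ and using $b/(q-1)\equiv\beta/(q-1)\pmod 2$, this contribution is precisely $\Int{\frac{b}{q-1}}\,\frac{b}{q-1}\,\zeta(2,b)$ (the remaining multiple $\beta=2(q-1)$ gives $b/(q-1)$ even, hence coefficient $0$, matching $\Delta(2,2(q-1))=0$). Adding the unconditional shuffle terms $\zeta(2+b)+\zeta(2,b)+\zeta(b,2)$ from Theorem~\ref{shuffle_thm2} yields the stated formula. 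The main obstacle is exactly this base-case step: one must verify that the only surviving base value is $\beta=q-1$ and that its contribution reproduces the somewhat unusual coefficient $\Int{\frac{b}{q-1}}\,\frac{b}{q-1}$; the digit-sum vanishing via Theorem~\ref{thm251} and the congruence $b/(q-1)\equiv\beta/(q-1)\pmod 2$ are what make this work.
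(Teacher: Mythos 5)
Your proposal is correct and follows essentially the same route as the paper: specialize Theorem~\ref{a_between_2_and_p} to $a=p=2$ to get the recursion $\Delta(2,b+r_2)-\Delta(2,b)=S_1(2+b)$, evaluate the base values via $\Delta(2,\beta)=S_1(2)S_1(-(r_2-\beta))$ and Theorem~\ref{thm251}, and lift to multizeta values by Theorem~\ref{shuffle_thm2}. The only difference is organizational (you iterate the recursion forward from $\beta$, while the paper verifies that the candidate right-hand side $D(2,b)$ satisfies the same difference equation and agrees on $1\le b\le r_2$), which changes nothing of substance.
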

\begin{proof}
By Theorem~\ref{shuffle_thm2}, it is enough to prove $\Delta(2,b)=D(2,b)$,
where
\begin{align}
D(2,b)  = \sum _{i=0}^{\sigma-1} S_1(b-\phi(i,0))+
\Int{\frac{b}{q-1}}\frac{b}{q-1} S_1(2).\label{D2b_for_p_equals_2}
\end{align}
Note that $b + r_2 = r_2(\sigma+1) + \beta$; also $b+r_2-\phi(i,0) = b
-\phi(i-1,0)$. It follows that
\begin{align*}
D(2,b+r_2) 
 = \sum _{i=-1}^{\sigma-1} S_1(b-\phi(i,0))+
\Int{\frac{b+r_2}{q-1}}\frac{b+r_2}{q-1} S_1(2).
\end{align*}
Since $q-1$ divides $r_2$,  then $q-1$ divides $b$ if and only if $q-1$ divides
$b+r_2$; also $b-\phi(-1,0)=2+b$. Thus, $D(2,b+r_2) - D(2,b)= S_1(2+b)$. On the
other hand, since $a = p$, by Theorem~\ref{a_between_2_and_p}, we have
$\Delta(2,b+r_2) - \Delta(2,b) = D(2,b+r_2)-D(2,b)$. 

Let us compute $\Delta(2,b)$ for $1\le b \le r_2$. Proceeding as in
the proof of Theorem~\ref{large_indices}, we get $\Delta(2,b)  =  S_1(2)
S_1(-(r_2-b))$. If $b = r_2=2(q-1)$, then $S_1(0)=0$. If $b = q-1$, then, by
Theorem~\ref{thm251} (b), $S_1(-(q-1))=-1$ follows. Suppose $1 \le b<q-1$. Then
$r_2 -b = (q-2-b) +q$ is the base $q$ expansion of $r_2-b$. Since $\ell(r_2-b)
<q-1$, by Theorem~\ref{thm251} (a), it follows that $S_1(-(r_2-b))=0$. If now,
$q-1
< b <r_2$, write $b = (q-1) + \rho$, where $0 < \rho < q-1$. Then $r_2
-b = q-1-\rho$ is the base $q$ expansion of $r_2-b$. Applying
Theorem~\ref{thm251} (a) again, we have that $S_1(-(r_2-b))=0$. In summary,
$\Delta(2,b) =S_1(2)$ if $b=q-1$ and $0$, otherwise. Therefore, $\Delta(2,b) =
D(2,b)$ for $b = 1,\dotsc, r_2$. 
\end{proof}

\begin{theorem}[Conjecture 2.6, \cite{Jalr10}]\label{thm_conjecture2.6}
Let $p$ be any prime and let $q$ be a power of $p$. Then
\begin{multline*}
\zeta(2)\zeta(b)=\zeta(2+b) + \zeta(2,b) + \zeta(b,2) \\
+ \sum _{j=0}^{p-1} (j+2)\sum _{b-\phi(i,p-1-j)>2}
\zeta(b-\phi(i,p-1-j),2 + \phi(i,p-1-j) )\\
+ \Int{\frac{b}{q-1}}\frac{b}{q-1}
\zeta(2,b), 
\end{multline*}
where $\phi(i,k)$ and $\Int\cdot$ are as in Definition~\ref{main_definitions}. 
\end{theorem}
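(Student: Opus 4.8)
The plan is to follow the template of Theorems~\ref{thm_conjecture2.1} and \ref{a_between_2_and_p}. By Theorem~\ref{shuffle_thm2} it suffices to prove the corresponding identity at the level of power sums, namely $\Delta(2,b)=D(2,b)$, where $D(2,b)$ is obtained from the right-hand side by replacing each $\zeta(b-\phi(i,k),2+\phi(i,k))$ with $S_1(b-\phi(i,k))$ and $\zeta(2,b)$ with $S_1(2)$, that is,
\[
D(2,b)=\sum_{j=0}^{p-1}(j+2)\sum_{b-\phi(i,p-1-j)>2}S_1(b-\phi(i,p-1-j))+\Int{\frac{b}{q-1}}\frac{b}{q-1}\,S_1(2).
\]
Since $a=2\le p$ forces $m=1$ and hence $r_2=p(q-1)$, Theorem~\ref{a_between_2_and_p} supplies the recursion
\[
\Delta(2,b+r_2)-\Delta(2,b)=\sum_{j=1}^{p-2}(j+1)\,S_1(2+b+(p-j)(q-1))+S_1(2+b),
\]
with $f_{2,j}=\binom{j+1}{j}=j+1$. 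So I must (i) check that $D$ obeys the same recursion and (ii) match $\Delta(2,b)$ with $D(2,b)$ on the base window $1\le b\le r_2$.

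For the recursion, the key is the shift identity $\phi(i,k)=(i+1)r_2-2-k(q-1)$, which yields $(b+r_2)-\phi(i,k)=b-\phi(i-1,k)$. Reindexing $i\mapsto i+1$ shows that the inner sums for $b+r_2$ differ from those for $b$ only through the new boundary terms at $i=-1$, namely $S_1(b-\phi(-1,k))=S_1(b+2+k(q-1))$, whose condition $b-\phi(-1,k)>2$ holds automatically. Writing $k=p-1-j$, so the coefficient $(j+2)$ becomes $(p+1-k)$, the increment of the double sum is $\sum_{k=0}^{p-1}(p+1-k)\,S_1(b+2+k(q-1))$. Since $q-1\mid r_2$, the indicator $\Int{\frac{b}{q-1}}$ is unchanged and $\frac{b+r_2}{q-1}-\frac{b}{q-1}=p$, so the last term contributes $p\,S_1(2)$. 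The characteristic now does the work: reducing $(p+1-k)$ modulo $p$ collapses the $k=0$ coefficient $p+1$ to $1$, kills the $k=1$ coefficient $p$, leaves $(p+1-k)$ for $k\ge 2$, and annihilates the spurious $p\,S_1(2)$. The surviving terms are exactly $S_1(b+2)+\sum_{k=2}^{p-1}(p+1-k)\,S_1(b+2+k(q-1))$, which after $k=p-j$ is precisely the increment from Theorem~\ref{a_between_2_and_p}. Hence $D$ and $\Delta$ satisfy the same recursion of period $r_2$.

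For the base window I would proceed as in the second proof of Theorem~\ref{large_indices}. Starting from \eqref{Delta_ab_in_terms_of_S1a} with $a=2$, using that $1\le 2\le p$ forces $[1]^2S_1(2)=1$ (the $n_a=0$ case of \eqref{s_1(k+1)}) together with $[1]/n=n^{q-1}-1$, I obtain
\[
S_1(2)-\frac{1}{n^2}=S_1(2)\bigl(1-(n^{q-1}-1)^2\bigr)=S_1(2)\bigl(2n^{q-1}-n^{2(q-1)}\bigr),
\]
whence $\Delta(2,b)=S_1(2)\bigl(2S_1(b-(q-1))-S_1(b-2(q-1))\bigr)$. For $1\le b\le r_2$ the two arguments $b-(q-1)$ and $b-2(q-1)$ are small, and I would evaluate the relevant $S_1$ by the small-index formula when they are positive and by Theorem~\ref{thm251} (the digit-sum vanishing criterion, together with the special value $S_1(-(q-1))=-1$) when they are non-positive, organizing the computation by the residue of $b$ modulo $q-1$. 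Matching this against $D(2,b)$, whose double sum collapses in this window to the $i=0$ terms $\sum_{m=1}^{p-1}(m+1)\,S_1(b-m(q-1)+2)$ over those $m$ with $b>m(q-1)$, plus the $\Int$-term, then closes the argument.

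I expect the base-window computation to be the main obstacle. Unlike the clean $p=2$ case of Theorem~\ref{thm_conjecture2.1}, one must here evaluate $S_1$ at several shifted arguments across every residue class of $b$ modulo $q-1$, track the binomial coefficients through Lucas' theorem, and verify that the $\Int$-term correctly absorbs the cases where an argument equals $2$, which are excluded by the strict inequality $b-\phi(i,k)>2$. By contrast, the recursion step is essentially forced once the characteristic-$p$ reductions of the coefficients $(p+1-k)$ and of $p\,S_1(2)$ are observed.
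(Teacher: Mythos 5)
Your reduction to $\Delta(2,b)=D(2,b)$ and your treatment of the recursion step are correct and essentially identical to the paper's: the shift $(b+r_2)-\phi(i,k)=b-\phi(i-1,k)$, the new boundary terms at $i=-1$, the mod-$p$ collapse of the coefficients $p+1-k$ and of $p\,S_1(2)$, and the match with the increment supplied by Theorem~\ref{a_between_2_and_p} all check out. The gap is in the base window $1\le b\le r_2$, which is the crux of the proof and which you leave as a plan rather than an argument. Your identity $\Delta(2,b)=S_1(2)\bigl(2S_1(b-(q-1))-S_1(b-2(q-1))\bigr)$ is correct, but for $p\ge 3$ the arguments $b-(q-1)$ and $b-2(q-1)$ are \emph{positive} for most $b$ in the window (they range up to $(p-1)(q-1)$ and $(p-2)(q-1)$), so Theorem~\ref{thm251} does not apply to them; you would have to expand each such $S_1$ via \eqref{s_1(k+1)}, do the same for every term of $D(2,b)$, and compare coefficients --- none of which is carried out, and it is exactly the part you yourself flag as ``the main obstacle.'' The clean trick from the $p=2$ case (Theorem~\ref{thm_conjecture2.1}), where $1-(n^{q-1}-1)^2$ is a single monomial and $\Delta(2,b)=S_1(2)S_1(-(r_2-b))$ with $r_2-b\ge 0$, does not carry over.

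The paper closes this gap by a different maneuver: it writes $D(2,b)=\sum_{n\in A_{1^+}}P_n/n^{b+2-(q-1)}$ for an explicit polynomial $P_n$, computes $(n^{q-1}-1)^2P_n$ so that the numerator of $\Delta(2,b)-D(2,b)$ telescopes, and arrives at
\begin{align*}
[1]^2\bigl(\Delta(2,b)-D(2,b)\bigr)=(l+2)S_1(-k_1)-(l+1)S_1(-k_2)-f_b,
\end{align*}
with $k_1=(l+1)(q-1)-b\ge 0$ and $k_2=(l+2)(q-1)-b\ge 0$; only then does Theorem~\ref{thm251} (the digit-sum vanishing criterion together with $S_1(-(q-1))=-1$) finish the job. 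The essential idea you are missing is to subtract $D(2,b)$ from $\Delta(2,b)$ \emph{before} summing over $n$, so that only power sums at non-positive exponents survive. Without that (or an equally explicit alternative), the base-window verification --- and hence the proof --- is incomplete.
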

\begin{proof}
Let $f_b = \Int{\frac{b}{q-1}}
\frac{b}{q-1}$.  By Theorem~\ref{shuffle_thm2}, it is enough to prove
$\Delta(2,b) = D(2,b)$, where 
\begin{align}
 D(2,b) = \sum _{j=0}^{p-1} (j+2) \sum _{i=0}^{n(b,j)}
S_1(b+2-(pi+1+j)(q-1)) + f_b S_1(2).\label{D(2,b) ver1}
\end{align}
We have $b+r_2+2 -(pi+1+j)(q-1) = b+2 - (p(i-1)+1+j)(q-1)$ and $n(b+r_2,j) =
n(b,j)+1$. Then
\begin{align*}
  D(2,b+r_2) 
 = \sum_{j=0}^{p-1}(j+2) \sum _{i=-1}^{n(b,j)}
S_1(b+2-(pi+1+j)(q-1))+f_{b+r_2} S_1(2).
\end{align*}

Now,   $q-1$ divides $b$ if and only if $q-1$ divides
$b+r_2$ because  $q-1$ divides $r_2$. Therefore,
\begin{align*}
D(2,b+r_2) - D(2,b) 
 = \sum _{j=1}^{p}(j+1) S_1(2+b+(p-j)(q-1))).
\end{align*}
On the other hand, $f_{2,j} =\binom{j+1}{1}=j+1$ for $1\le j \le p-2$, $j+1=0$
when $j = p-1$, and $j+1= 1$ when $j=p$. By Theorem~\ref{a_between_2_and_p},
it follows that
\begin{align*}
  \Delta(2,b+r_2) - \Delta(2,b) & = 
\sum _{j=1}^{p-2} f_{2,j}S_1(2+b+(p-j)(q-1)) + S_1(2+b)\\
& = D(2,b+r_2) - D(2,b).
\end{align*}
To finish, we  prove that $\Delta(2,b) = D(2,b)$ for $1 \le b \le r_2$.
Let $1 \le b \le r_2$. If $n(b,j)\ge 1$, then $r_2 \ge b \ge r_2 +
(1+j)(q-1)+1$ which implies the contradiction $0 \ge (1+j)(q-1)+1 \ge 1$. Thus,
$n(b,j)<1$. Now, $n(b,j)=0$ if and only if $b-1 -(1+j)(q-1) \ge 0$. Thus,
equation~\eqref{D(2,b) ver1}  becomes
\begin{align}
  D(2,b) = \sum _{j=0}^ {\left\lfloor \frac{b-q}{q-1}\right\rfloor   }
(j+2)S_1(b+2-(1+j)(q-1)) +  f_b S_1(2).\label{D(2,b)}
\end{align}
Write $b = \lambda (q-1) + \rho$, where $0 \le \rho <q-1$. If $\rho = 0$, take
$l
= \lambda-1$; if $\rho>0$, let $l = \lambda$. Thus,  $l$ is the integer in the
set $\set{0,1, \dotsc,p-1}$ such that $l(q-1)+1
\le b \le (l+1)(q-1)$. Then, $(l-1)(q-1) \le b-q \le l(q-1)-1<l(q-1)$.
Therefore, $\left \lfloor \frac{b-q}{q-1} \right \rfloor = l-1$. Now, we
rewrite equation~\eqref{D(2,b)} as follows:
\begin{align*}
  D(2,b) & = \sum _{n \in A_{1^+}} \sum _{j=0}^{l-1}
\frac{j+2}{n^{b+2-(1+j)(q-1)}} + \sum _{n\in A_{1^+}} \frac{f_b}{n^2} \\
& = \sum _{n\in A_{1^+}} \frac{1}{n^{b+2-(q-1)}}
\left(
\sum _{j=0}^{l-1} \frac{(j+2)n^{b+2-(q-1)}}{n^{b+2-(1+j)(q-1)}} + f_b
n^{b-(q-1)}
\right)\\
& = \sum _{n \in A_{1^+}} \frac{P_n}{n^{b+2-(q-1)}},
\end{align*}
where $P_n = \displaystyle \sum _{j=0}^{l-1}(j+2)n^{j(q-1)} + f_b n^{b-(q-1)}$.
By \eqref{special_case1}, $S_1(2) = 1/[1]^2$. Then
\begin{align*}
 \Delta(2,b) - D(2,b)  
 = \sum _{n \in A_{1^+}} \frac{n^2-[1]^2-n^{q-1}[1]^2P_n}{n^{b+2}[1]^2}.
\end{align*}

Now, we compute $(n^{q-1}-1)^2 P_n$. Firstly, note
that
\begin{align*}
  n^{2(q-1)} \sum _{j=0}^{l-1}(j+2)n^{j(q-1)} & =
\sum _{j=0}^{l-1}(j+2)n^{(j+2)(q-1)},\\
-2n^{q-1}\sum _{j=0}^{l-1}(j+2)n^{j(q-1)} 
& = - \sum _{j=-1}^{l-1} 2(j+3)n^{(j+2)(q-1)},\\
\intertext{and}
\sum _{j=0}^{l-1}(j+2)n^{j(q-1)} & = \sum _{j=-2}^{l-1}  (j+4)n^{(j+2)(q-1)}.
\end{align*}
Therefore, $(n^{q-1}-1)^2 P_n$ equals
\begin{align*}
2 - n^{q-1} - (l+2)n^{l(q-1)} + (l+1)n^{(l+1)(q-1)} +  (n^{q-1}-1)^2 f_b
n^{b-(q-1)}.
\end{align*}
Since $n^{q-1}[1]^2 P_n = n^{q+1}(n^{q-1}-1)^2 P_n$, it follows that
$n^{q-1}[1]^2 P_n$ equals
\begin{align*}
 2 n^{q+1} - n^{2q} - (l+2)n^{(l+1)(q-1)+2}  + (l+1)n^{(l+2)(q-1)+2} +
n^{q-1}[1]^2 f_b n^{b-(q-1)}.
\end{align*}
Using $n^2 - [1]^2 = -n^{2q} + 2n^{q+1}$, we get
\begin{multline*}
  n^2 - [1]^2 - n^{q-1}[1]^2 P_n = (l+2)n^{(l+1)(q-1)+2} \\ -
(l+1)n^{(l+2)(q-1)+2}  - n^{q-1}[1]^2 f_b n^{b-(q-1)}.
\end{multline*}
Dividing $n^2 - [1]^2 - n^{q-1}[1]^2 P_n$ by $n^{b+2}$ and
summing over $n\in A_{1^+}$, we get
\begin{align*}
  [1]^2 \left( \Delta(2,b) - D(2,b) \right) =
(l+2)S_1(-k_1) - (l+1) S_1(-k_2) - f_b.
\end{align*}
where $k_1 = (l+1)(q-1)-b$ and $k_2 = (l+2)(q-1)-b$.
Now, if $b \equiv 0 \bmod (q-1)$, then $l = \frac{b}{q-1}-1$; thus,
$k_1 = 0$ and $k_2 =q-1$. By Theorem~\ref{thm251} (b), we have $S_1(-(q-1))=-1$;
since $S_1(0) = 0$  and $f_b =\frac{b}{q-1} = l+1$, it follows that 
$\left( \Delta(2,b) - D(2,b) \right)=0$. Suppose $b \not \equiv 0 \bmod (q-1)$.
Then, $f_b =0$. In this case, $b = l(q-1) + \rho$ with $0 < \rho <q-1$.
Therefore, $k_1 = q-1-\rho$ and $k_2 = (q-2-\rho)+q$. Note that these are the
base $q$ expansions of $k_1$ and $k_2$ because $q-1-\rho<q-1$ and
$q-2-\rho<q-1$. Then, $\ell(k_1)  =  \ell(k_2) = q-1-\rho$. Since $\ell(k_1) =
\ell(k_2) < (q-1)$, by Theorem~\ref{thm251} (a), it follows that $S_1(-k_1) =
S_1(-k_2)=0$. Therefore, $\Delta(a,b) - D(2,b)=0$ for $1 \le b \le r_2$. 
\end{proof}

\begin{remark}
Theorem~\ref{thm_conjecture2.6} generalizes Theorem~\ref{thm_conjecture2.1}.
Let $q$ be a power of $p$. Let $b\in \Z_+$  and $0\le j \le p-1$. Since
$\phi(i,p-1-j) = (pi+1+j)(q-1)-2$, the condition $b - \phi(i,p-1-j) > 2$ is
equivalent to
$n(b,j) \ge i$, where $n(b,j) = \left \lfloor \frac{b-1-(1+j)(q-1))}{r_2} \right
\rfloor$. More precisely, $b - \phi(i,p-1-j) > 2  \Longleftrightarrow
\frac{b-1}{q-1} \ge pi+1+j$. Now, we specialize to  $p = 2$ and write $b = r_2
\sigma + \beta$, $0<\beta
\le r_2$. Then, $n(b,p-1)  = n(b,1) = \sigma -1$. Equation~\eqref{D(2,b) ver1}
becomes equation~\eqref{D2b_for_p_equals_2}:
\begin{multline*}
\sum _{j=0}^{1} (j+2)\sum _{i=0} ^{\sigma-1}
S_1(b-\phi(i,1-j)) + \Int{\frac{b}{q-1}}\frac{b}{q-1} S_1(2)
\\
= 3\sum _{i=0} ^{\sigma-1}
S_1(b-\phi(i,0)) + \Int{\frac{b}{q-1}}\frac{b}{q-1} S_1(2).
\end{multline*}
\end{remark}

\begin{theorem}[Conjecture 2.3, \cite{Jalr10}]\label{thm_conjecture2.3}
Let $q=2$.  For $b\in \Z_+$, we have
\begin{multline*}
\zeta(3)\zeta(b) = \zeta(b+3) + \zeta(3,b) + \zeta(b,3) \\
+ \sum _{i=0}^{\left \lfloor (b-5)/4 \right \rfloor }
\zeta(b-1-4i, 4+4i) 
+\sum _{i=0}^{\left \lfloor (b-4)/4 \right \rfloor } \zeta(b-4i, 3+4i)\\
+ \sum _{i=1}^2 \Int{\frac{b-i}{r_3}} (\zeta(2,b+1)+\zeta(3,b)).
\end{multline*}
\end{theorem}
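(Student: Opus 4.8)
The plan is to pass to degree one via Theorem~\ref{shuffle_thm2}, so that it suffices to prove the equality $\Delta(3,b)=D(3,b)$, where $D(3,b)$ records the first indices of the multizeta values appearing on the right-hand side, namely
\[
D(3,b)=\sum_{i=0}^{\lfloor (b-5)/4\rfloor} S_1(b-1-4i)+\sum_{i=0}^{\lfloor (b-4)/4\rfloor} S_1(b-4i)+\varepsilon_b\bigl(S_1(2)+S_1(3)\bigr),
\]
with $\varepsilon_b=\Int{(b-1)/r_3}+\Int{(b-2)/r_3}$ and $r_3=4$ (here $q=p=2$, so $m=2$ in Definition~\ref{main_definitions}). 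Note that $\zeta(3,b)$ occurs both among the trivial terms $\zeta(b+3),\zeta(3,b),\zeta(b,3)$ and inside the indicator sum; only the copy inside the indicator sum, which is the multizeta value with first index $a_i=3$, contributes to $D(3,b)$.

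The key step is a closed form for $\Delta(3,b)$. Because $a=3>p=2$, Theorem~\ref{a_between_2_and_p} is unavailable, so I would argue straight from \eqref{Delta_ab_in_terms_of_S1a}, which gives $\Delta(3,b)=\sum_{n\in A_{1^+}} n^{-b}\bigl(S_1(3)-n^{-3}\bigr)$. Since $q=2$ the set $A_{1^+}=\{t,t+1\}$ has only two elements; writing $n'$ for the partner of $n$, we have $S_1(3)-n^{-3}=(n')^{-3}$, and as $nn'=[1]$ this is $n^{3}/[1]^{3}$. Summing over $A_{1^+}$ yields the clean formula $\Delta(3,b)=[1]^{-3}\sum_n n^{3-b}=S_1(b-3)/[1]^3$, which plays the role that the missing recursion of Theorem~\ref{a_between_2_and_p} plays for $a\le p$.

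I would then finish by the telescoping-plus-base-case scheme of Theorems~\ref{thm_conjecture2.1} and~\ref{thm_conjecture2.6}. First record the recursion $[1]S_1(m)=S_1(m-1)+S_1(m-2)$, valid in characteristic $2$ for every $m$, obtained by multiplying the identity $n^2-n-[1]=0$ by $n^{-m}$ and summing over $A_{1^+}$. Applying it three times gives $[1]^3\bigl(S_1(b+3)+S_1(b+4)\bigr)=S_1(b+1)+S_1(b-3)$, whence the closed form yields $\Delta(3,b+4)-\Delta(3,b)=[1]^{-3}\bigl(S_1(b+1)+S_1(b-3)\bigr)=S_1(b+3)+S_1(b+4)$. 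Reindexing the two sums in $D$ (the indicator term is $4$-periodic and drops out) gives $D(3,b+4)-D(3,b)=S_1(b+3)+S_1(b+4)$ as well, for $b\ge1$. It remains to check the base cases $b=1,2,3,4$ directly from $\Delta(3,b)=S_1(b-3)/[1]^3$, using $S_1(-2)=S_1(-1)=1$, $S_1(0)=0$ and $S_1(1)=1/[1]$ in characteristic $2$; here the diagonal indicator term $\varepsilon_b(S_1(2)+S_1(3))$ is exactly what supplies the values at $b\equiv1,2\pmod 4$. An induction on $b$ modulo $4$ then gives $\Delta(3,b)=D(3,b)$ for all $b$, and Theorem~\ref{shuffle_thm2} lifts this to the stated identity.

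The main obstacle is the bookkeeping that makes $D$ agree with $\Delta$: one must verify that the two floor-indexed sums reproduce the recursion $S_1(b+3)+S_1(b+4)$ with the correct edge behaviour for small $b$, and that the factors $\Int{(b-i)/r_3}$ account precisely for the four base values without double-counting the $\zeta(3,b)$ contribution. Establishing the closed form $\Delta(3,b)=S_1(b-3)/[1]^3$ --- which crucially uses that $A_{1^+}$ has only two elements when $q=2$, compensating for the failure of Theorem~\ref{a_between_2_and_p} at $a=3$ --- is the other point requiring care.
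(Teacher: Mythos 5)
Your proposal is correct and follows essentially the same route as the paper: reduce to $\Delta(3,b)=D(3,b)$ at $d=1$, establish the period-$4$ recursion $\Delta(3,b+4)-\Delta(3,b)=S_1(b+3)+S_1(b+4)=D(3,b+4)-D(3,b)$, verify the base cases $b=1,2,3,4$, and lift via Theorem~\ref{shuffle_thm2}. The only real difference is in one computational step: where the paper checks the pointwise identity $S_1(3)-n^{-3}-n^{4}S_1(3)=1$ to get the recursion, you derive it from the closed form $\Delta(3,b)=S_1(b-3)/[1]^{3}$ together with $[1]S_1(m)=S_1(m-1)+S_1(m-2)$, both of which are valid for $q=2$ and make the base-case evaluations cleaner.
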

\begin{proof}
It is enough to prove that $\Delta(3,b) = D(3,b)$, where
\begin{multline*}
D(3,b)=\sum _{i=0}^{\left \lfloor (b-5)/4 \right \rfloor }
S_1(b-1-4i) 
+\sum _{i=0}^{\left \lfloor (b-4)/4 \right \rfloor } S_1(b-4i)\\
+ \sum _{i=1}^2 \Int{\frac{b-i}{r_3}} (S_1(2)+S_1(3)).
\end{multline*}
Using that $\lfloor x+1 \rfloor = \lfloor
x \rfloor +1$, we get $\lfloor
(b-5)/4
\rfloor = \lfloor (b-1)/4 \rfloor - 1$ and $\lfloor
(b-4)/4 \rfloor = \lfloor b/4 \rfloor -1$. Then,
\begin{align*}
  \sum _{i=0}^{\lfloor (b-1)/4 \rfloor } S_1(b+4 -1-4i)   
= \sum _{i=-1}^{\lfloor (b-5)/4  \rfloor } S_1(b -1-4i),
\end{align*}
and
\begin{align*}
\sum _{i=0}^{ \lfloor b/4 \rfloor } S_1(b+4-4i) 
= \sum _{i=-1}^{\lfloor (b-4)/4 \rfloor } S_1(b-4i).
\end{align*}
Since $(b-i) \equiv 0 \bmod 4$ if and only if $(b+4-i) \equiv 0 \bmod 4$, it
follows that $D(3,b+4) - D(3,b)  = S_1(b+3) + S_1(b+4)$.
By \eqref{s_1(k+1)}, we have
\begin{align*}
  S_1(3) =\frac{1}{[1]^3} + \frac{1}{[1]^2} = \frac{1}{n^3(n+1)^3} +
\frac{1}{n^2(n+1)^2}.
\end{align*}
A straight-forward calculation shows that $S_1(3) - \frac{1}{n^3} - n^4 S_1(3)
=1$.
It follows that
\begin{align*}
  \Delta(3,b+4) - \Delta(3,b) 
& = \sum _{n\in A_{1^+}} \frac{1}{n^{b+4}}
\left(
S_1(3) - \frac{1}{n^3} - n^4 S_1(3) - n
\right)\\
& = S_1(b+4) + S_1(b+3).
\end{align*}
Then, $\Delta(3,b)$ can be found recursively with recursion length $r_3 = 4$,
and $\Delta(3,b+4)- \Delta(3,b) = D(3,b+4)-D(3,b)$. Finally, a  direct
calculation shows that $\Delta(3,b) = D(3,b)$ for $b = 1,2,3,4$. 
\end{proof}

\begin{remark}
  Let $q = 2$ and $a = 3$. Then, $\phi(i,j) = 1-j+4i$. The condition
$b- \phi(i,0)>3$ is equivalent to $i \le (b-5)/4$. Since $j_{3,\max} = 1$, the
condition $b-\phi(i,j_{3,\max})>3$ is equivalent to $ i\le (b-4)/4$. Therefore,
the proof of Theorem~\ref{thm_conjecture2.3} confirms Conjecture 2.3 of
\cite{Jalr10}.
\end{remark}

{\bf Acknowledgments}
This work has been developed under
the direction of Dr. Dinesh S. Thakur at the University of Arizona and Dr.
Gabriel D. Villa Salvador at the Departamento de Control Autom\'atico of the
Centro de Investigaci\'on y de Estudios Avanzados del IPN (Cinvestav-IPN) in
M\'exico City. I thank Javier Diaz-Vargas for his suggestions and advice. I
want
to express my gratitude to the Universidad Aut\'onoma de Yucat\'an and the
Consejo Nacional de Ciencia y Tecnolog\'ia for their financial support. I thank
 the contributors of the Sage project~\cite{Sage444} for providing the
development environment for this research.



\newcommand{\etalchar}[1]{$^{#1}$}
\def\cprime{$'$} \def\cprime{$'$}

\end{document}